\newtheorem{theorem}{Theorem}[section]
\newtheorem{lemma}{Lemma}[section]
\newtheorem{proposition}{Proposition}[section]
\def\argmin{\operatorname{argmin}}
\DeclareMathOperator{\Diag}{Diag}
\DeclareMathOperator{\diag}{diag}
\DeclareMathOperator{\Id}{Id}
\newcommand{\p}{\partial}
\newcommand{\K}{\mathcal K}
\newcommand{\sym}{\mathbb S}
\newcommand{\R}{\mathbb R}
\newcommand{\N}{\mathbb N}
\newcommand{\NN}{\mathbb{N}}
\begin{document}

\title{A semi-smooth Newton method for general projection equations applied to the nearest correlation matrix problem}

\author{Nicolas F. Armijo \thanks{Department of Applied Mathematics, University of São Paulo, Brazil (e-mail: {\tt
nfarmijo@ime.usp.br}).  The author was supported by Fapesp grant 2019/13096-2.}  
\and
Yunier  Bello-Cruz\thanks{Northern Illinois University, USA (e-mail: {\tt
yunierbello@niu.edu}).  The author was partially supported by the NSF Grant DMS-2307328 and by
an internal grant from NIU.}
\and
Gabriel Haeser \thanks{Department of Applied Mathematics, University of São Paulo, Brazil (e-mail: {\tt
ghaeser@ime.usp.br}). The author was supported by CNPq and Fapesp grant 2018/24293-0.}
}
\date{\today}

\maketitle

%\vspace{-0.4 in}

\begin{abstract}  

\noindent In this paper, we extend and investigate the properties of the semi-smooth Newton method when applied to a general projection equation in finite dimensional spaces. We first present results concerning Clarke's generalized Jacobian of the projection onto a closed and convex cone. We then describe the iterative process for the general cone case and establish two convergence theorems. We apply these results to the constrained quadratic conic programming problem, emphasizing its connection to the projection equation. To illustrate the performance of our method, we conduct numerical experiments focusing on semidefinite least squares, in particular the nearest correlation matrix problem. In the latter scenario, we benchmark our outcomes against previous literature, presenting performance profiles and tabulated results for clarity and comparison.
\medskip

\noindent
{\bf Keywords:} Conic programming, nearest correlation matrix, quadratic programming,  semi-smooth Newton method.

\medskip
\noindent
 {\bf  2010 AMS Subject Classification:} 90C33, 15A48.

\end{abstract}

%%%%%%%%%%%%%%%%%%%%%%%%%%%%%%%%%%%%%%%%%%%%%%%%%%%%%%%%%%%%%%
\section{Introduction}
%%%%%%%%%%%%%%%%%%%%%%%%%%%%%%%%%%%%%%%%%%%%%%%%%%%%%%%%%%%%%%

We begin by considering the following special nonlinear system:
\begin{equation}\label{eq:gen_closconv}
    P_{\K}(x) +Tx=b, 
\end{equation} 
where $\K \subseteq \mathbb{X}$ is a non-nempty, closed and convex cone of a finite dimensional vector space $\mathbb{X}$ with an inner product $\langle\cdot,\cdot\rangle$, $P_{\K}(x)$ is the projection of $x \in \mathbb{X}$ onto $\K$, $b\in \mathbb{X}$, and $T\colon \mathbb{X}\to \mathbb{X}$ is a linear operator.
Some particular cases of equation \eqref{eq:gen_closconv} have been studied, for instance, in \cite{Chen:2010,bello:2016,Bello-Cruz:2017,Griewank:2015, Sun:2015,Mangasarian:2009a,Ferreira:2015,Barrios:2015,Barrios:2016,armijo:2023}.
Among those problems, particular attention has been given to the cases where $\K$ is the $n$-dimensional non-negative orthant or Lorentz's cone. For these cases, novel iterative methods have been proposed; see, for instance, \cite{Barrios:2016, Bello-Cruz:2017,armijo:2023}.

Equation \eqref{eq:gen_closconv} is closely related to the quadratic cone-constrained programming: 
\begin{equation}
%\left(
\begin{matrix} \label{quad_conic_problem}
\text{min} & \frac{1}{2}\langle x,Qx \rangle + \langle q,x \rangle, \\
\text{s.t.} & x \in \K,
\end{matrix}
%\right)
\end{equation}
for  a linear operator $Q\colon \mathbb{X}\to \mathbb{X}$ and a vector $q\in \mathbb{X}$. The particularly relevant case occurs when $\mathbb{X}=\R^n$ and $\K$ is either the non-negative orthant or Lorentz's cone. The connection of \eqref{eq:gen_closconv} with \eqref{quad_conic_problem} is established by setting $T = (Q-\Id)^{-1}$ (where $\Id$ is the $n\times n$ identity matrix) and $b=-Tq$. Moreover, the projection onto $\K$ of a solution of equation \eqref{eq:gen_closconv} satisfies the first order necessary optimality conditions for problem \eqref{quad_conic_problem}. Here, we also prove that this property holds in the general case, that is, for any closed and convex cone $\K$ in a finite dimensional vector space $\mathbb{X}$ and considering any linear operator $Q\colon \mathbb{X}\to \mathbb{X}$. Additional linear equality constraints are also considered in \eqref{quad_conic_problem}.

We here focus our attention on the {\it semi-smooth Newton method} for solving equation \eqref{eq:gen_closconv}. This method finds a zero of the mapping $F\colon \mathbb{X}\to \mathbb{X}$,
\begin{equation} \label{eq:fucpw}
F(x)=P_{\K}(x) + Tx- b,  \qquad \qquad  ~x \in \mathbb{X}.
\end{equation}
By starting at a point  $x^{0}\in \mathbb{X}$,
the classical semi-smooth Newton method iterates as follows:
\begin{equation}\label{eq:newtonclassical}
x^{k+1} = x^k - \left[F'(x^k)\right]^{-1} F(x^k), 
\end{equation} where $F'(x^k)$ is a generalized Jacobian of $F$ at $x^k$. This iteration applying to \eqref{eq:fucpw} will take the following simple form:
\begin{equation}\label{eq:newtonc2conic}
\left(V(x^{k}) +T\right)x^{k+1}=b,  \qquad k\in\NN,
\end{equation} where $V(x^k) \in \p_C P_{\K}(x^k)$ is a Clarke's generalized Jacobian of the projection $P_{\K}(\cdot)$ at $x^k$. This is a consequence of the relation $V(x^k)x^k=P_{\K}(x^k)$ which we will prove later.

Our approach allows us to consider the relevant case when $\K=\sym^n_+$, the cone of positive semidefinite $n\times n$ matrices, where a subdifferential $V(x^k)$ can be computed by the spectral decomposition of $x^k$. For this case, we will tackle the quadratic cone-constrained problem \eqref{quad_conic_problem}, where we also include additional linear constraints; see problem \eqref{quad_conic_lin_problem} below. This is a well-known optimization problem with several applications. The goal is to study further the properties of the semi-smooth Newton method and prove the Q-linear global convergence of the iteration under some standard conditions on the linear operator $T$ for a general cone $\K$, allowing a more detailed study for the cone of positive semidefinite matrices.

The paper is organized as follows: we start by introducing our notation and presenting some preliminary results needed in our analysis. In Section \ref{sec:gen_proj}, we present general results about the projection operator onto cones and its differentiability.
In Section \ref{sec:newt_proj}, we present the semi-smooth Newton method and prove the previously mentioned global convergence theorems. Section \ref{sec:quad_app} is devoted to exploring the relationship between equation \eqref{eq:gen_closconv} and the quadratic conic programming problem \eqref{quad_conic_problem}, with special emphasis on the case when $\K = \sym^n_+$, including additional linear constraints. Finally, we present some numerical experiments for the positive semidefinite cone $\sym^n_+$, focusing particularly on semidefinite least squares problems. Specifically, we include a comparison with the algorithm from \cite{Qi:2006} in the context of the nearest correlation matrix problem, which is a well studied topic in economics \cite{Higham:2002}.

%%%%%%%%%%%%%%%%%%%%%%%%%%%%%%%%%%%%%%%%%%%%%%%%%%%%%%%%%
\subsection{Notations and preliminaries} \label{sec:int.1}
%%%%%%%%%%%%%%%%%%%%%%%%%%%%%%%%%%%%%%%%%%%%%%%%%%%%%%%%%%

In this section, we present some relevant results and definitions that are used in this paper. We denote the nonnegative integers by $\mathbb{N}$ and by $\Id$ the identity operator. The bracket notation $\langle \cdot, \cdot \rangle$ is referred to the inner product in any finite dimensional space $\mathbb{X}$.
 Given a linear operator $T\colon \mathbb{X} \rightarrow \mathbb{X}$, we use the notation $\|T\|$ for the operator norm of $T$, that is, $\|T\| := \text{max}\{\|Tx\| \mid x \in \mathbb{X}, \|x\|=1 \}$, where $\|x\|:=\sqrt{\langle x,x\rangle}$ is the norm associated with the inner product. We also denote by $T^*$ its adjoint linear operator $T^*\colon \mathbb{X} \rightarrow \mathbb{X}$, that is, $\langle Tx,y \rangle = \langle x, T^*y \rangle$, for all $x, y \in \mathbb{X}$. When $T$ is self-adjoint, that is, $T=T^*$, we say that $T$ is positive semidefinite (definite) if $\langle Tx, x \rangle \geq 0$ ($>0$, respectively), $\forall x \in \mathbb{X}, x\neq0$ and we denote by $\lambda_{\text{min}}(T)$ and $\lambda_{\text{max}}(T)$ the smallest and largest eigenvalues of $T$, respectively. For a cone $\K \subseteq \mathbb{X}$, the dual of $\K$ is denoted by $\K^*= \{y \in \mathbb{X}\mid \langle y,x \rangle \geq 0, \forall  x\in \K\}$. When $\mathbb{X}=\sym^n$ is the space of symmetric $n\times n$ matrices with the inner product $\langle A,B\rangle=\textrm{trace}(AB), A,B\in\sym^n$, we consider the self-dual cone $\K=\sym^n_+$ of positive semidefinite matrices.

The projection of a point $x$ onto a closed and convex cone $\K\neq\emptyset$ is denoted by $P_{\K}(x)$ and is defined by $P_{\K}(x)=\argmin\{\|y-x\|\mid y \in \K\}$. For a given mapping $F\colon \mathbb{X}\to \mathbb{X}$, we denote the set where it is differentiable by $D_F$ and the  Jacobian at a point $x\in D_F$ by $F'(x)$. The set of Clarke's generalized Jacobians at a point $x\in \mathbb{X}$ is denoted by $\partial_C F(x)$ and it is defined by
$$\partial_C F(x) = \text{conv}\left\{\lim_{k \to \infty} F'(x_k) \mid x_k \to x, x_k \in D_F\right\},$$ 
that is, the convex hull of all limits of Jacobians of $F$ nearby $x$. Throughout this paper, a Clarke's generalized Jacobian of the projection $P_{\K}(x)$ will be denoted by $V(x)$, so there is no confusion in not referring to a particular mapping $F$.  

We will make use of the well-known results below, which we state in the context of a general finite dimensional inner product space $\mathbb{X}$ as follows.

\begin{theorem}[Mean Value Theorem \cite{Clarke:1990}, Proposition 2.6.5, Page 79] \label{meanval_theo}
    Let $F\colon \mathbb{X} \rightarrow \mathbb{X}$ be a Lipschitz mapping. Then, we have
    $$F(y) - F(x) \in {\rm conv}\left(\p_C F([x,y])\right)(y-x),$$ that is, $F(y)-F(x) = U(z)(y-x)$ where $U(z) \in \p_C F(z)$ and $z$ is a convex combination of $x$ and $y$.
    \end{theorem}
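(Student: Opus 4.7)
The plan is to reduce the vector-valued statement to the one-dimensional Lebourg mean value theorem via a Hahn--Banach separation argument. Let
$$S := \mathrm{conv}\bigl(\p_C F([x,y])\bigr)(y-x) \subseteq \mathbb{X}.$$
First I would check that $S$ is nonempty, compact, and convex. The segment $[x,y]$ is compact; the set-valued map $\p_C F$ has nonempty compact convex values and is upper semicontinuous because $F$ is Lipschitz; hence $\p_C F([x,y])$ is compact in the operator norm, and the convex hull of a compact set in finite dimension is compact. The continuous linear evaluation $U \mapsto U(y-x)$ then transfers compactness and convexity to $S$.

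Next, suppose for the sake of contradiction that $F(y)-F(x)\notin S$. Strict separation of a point from a compact convex set yields $v\in\mathbb{X}$ and $\alpha\in\R$ with
$$\langle v,\, F(y)-F(x)\rangle \;>\; \alpha \;\geq\; \sup_{z\in[x,y]}\;\sup_{U\in \p_C F(z)}\;\langle v,\, U(y-x)\rangle.$$
Define $\phi\colon[0,1]\to\R$ by $\phi(t) := \langle v,\, F(x+t(y-x))\rangle$. Since $F$ is Lipschitz and $\langle v,\cdot\rangle$ is linear, $\phi$ is Lipschitz on $[0,1]$, so the scalar Lebourg mean value theorem (proved by applying the scalar Clarke subdifferential calculus to $t\mapsto \phi(t)-t(\phi(1)-\phi(0))$ at an interior extremum) produces some $\tau\in(0,1)$ and $\xi\in\p\phi(\tau)$ with $\xi = \phi(1)-\phi(0) = \langle v,\, F(y)-F(x)\rangle$.

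To close the argument, I would invoke Clarke's chain rule on the composition $\phi = g\circ h$, where $g(w) := \langle v, w\rangle$ is linear (hence smooth) and $h(t) := F(x+t(y-x))$ is Lipschitz. The chain rule gives
$$\p\phi(\tau) \;\subseteq\; \bigl\{\langle v,\, U(y-x)\rangle : U \in \p_C F(x+\tau(y-x))\bigr\},$$
so $\xi \leq \alpha$, contradicting $\xi > \alpha$. Therefore $F(y) - F(x) \in S$, which is exactly the stated inclusion.

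The main obstacle is the chain-rule step, since Clarke's chain rule for compositions of vector-valued Lipschitz mappings is only an inclusion in general. What makes it tight enough here is that the outer map $g$ is linear; if one prefers to bypass the vector-valued chain rule altogether, one may fix an orthonormal basis, write $\phi(t)=\sum_i v_i F_i(x+t(y-x))$, and combine the scalar chain rule component by component with the standard sum rule $\p_C(\sum_i v_i F_i)(u) \subseteq \sum_i v_i\,\p_C F_i(u)$. All remaining ingredients (upper semicontinuity and compactness of $\p_C F$, the scalar Lebourg theorem, and Hahn--Banach separation) are standard facts for Lipschitz mappings on finite-dimensional inner product spaces.
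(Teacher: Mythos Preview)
The paper does not supply its own proof of this theorem; it is quoted verbatim as a preliminary result from Clarke's book (Proposition~2.6.5), so there is nothing in the paper to compare against. Your outline---compactness and convexity of the target set $S$, strict separation, reduction to the scalar Lebourg mean value theorem for $\phi(t)=\langle v,F(x+t(y-x))\rangle$, and a chain-rule inclusion to close the contradiction---is exactly Clarke's original argument, and it is correct for the inclusion $F(y)-F(x)\in\mathrm{conv}\bigl(\partial_C F([x,y])\bigr)(y-x)$.

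One point worth noting: the paper's paraphrase ``that is, $F(y)-F(x)=U(z)(y-x)$ where $U(z)\in\partial_C F(z)$ and $z$ is a convex combination of $x$ and $y$'' is not an accurate restatement of the inclusion. In general one only obtains $F(y)-F(x)=\bigl(\sum_i \alpha_i U(z_i)\bigr)(y-x)$ with finitely many $z_i\in[x,y]$ and $U(z_i)\in\partial_C F(z_i)$; a single intermediate point need not exist for vector-valued $F$ (e.g.\ $F(t)=(\cos t,\sin t)$ on $[0,2\pi]$). Your proof, correctly, establishes only the inclusion and not the single-point version. This does not affect the paper's application in Lemma~\ref{norm_diff2}, since that argument only uses the eigenvalue bound $0\le\lambda_{\min}\le\lambda_{\max}\le 1$ for the resulting operator, which survives convex combinations.
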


\begin{lemma}[Banach's Lemma \cite{horn_johnson:2012}, Page 351] \label{banachlemma}
Let  $E\colon \mathbb{X} \rightarrow \mathbb{X}$ be a mapping onto $\mathbb{X}$. If $\|E\| <1$, then $E-\Id$ is invertible and
$$\|(E-\Id)^{-1}\| \leq \frac{1}{1-\|E\|}.$$
\end{lemma}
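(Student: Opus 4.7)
The plan is to construct the inverse explicitly via the Neumann series. Since $E - \Id = -(\Id - E)$, it suffices to prove that $\Id - E$ is invertible and then bound the norm of $(\Id - E)^{-1}$; the corresponding statement for $E - \Id$ follows because $(E - \Id)^{-1} = -(\Id - E)^{-1}$, which has the same operator norm. I read the hypothesis ``mapping onto $\mathbb{X}$'' as the usual one of a bounded linear operator $E \colon \mathbb{X} \to \mathbb{X}$, which is the setting in which Horn--Johnson state the result.

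First, I would work in the (finite-dimensional, hence complete) Banach algebra of linear operators on $\mathbb{X}$ and define the partial sums $S_n := \sum_{k=0}^n E^k$. Submultiplicativity of the operator norm gives $\|E^k\| \le \|E\|^k$, so the series $\sum_{k=0}^\infty \|E^k\|$ is dominated by the convergent geometric series $\sum_{k=0}^\infty \|E\|^k = \tfrac{1}{1-\|E\|}$. Consequently $(S_n)$ is Cauchy in operator norm and converges to some operator $S$, with $\|S\| \le \tfrac{1}{1-\|E\|}$ by the triangle inequality.

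Next, I would verify that $S$ really is the inverse of $\Id - E$. The telescoping identity $(\Id - E)\,S_n = S_n\,(\Id - E) = \Id - E^{n+1}$ is a direct algebraic computation. Because $\|E^{n+1}\| \le \|E\|^{n+1} \to 0$ and composition with a fixed bounded operator is continuous in the operator norm, letting $n \to \infty$ yields $(\Id - E)\,S = S\,(\Id - E) = \Id$. Hence $\Id - E$ is invertible with inverse $S$, and the estimate on $\|S\|$ transfers to $\|(E-\Id)^{-1}\|$. There is no real obstacle here: the only delicate point is justifying the passage to the limit in the telescoping identity, and that is immediate from the continuity of operator composition together with the geometric domination.
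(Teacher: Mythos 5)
Your Neumann-series argument is correct: the reduction $E-\Id=-(\Id-E)$, the geometric domination $\|E^k\|\le\|E\|^k$ giving convergence of $S=\sum_{k\ge0}E^k$ with $\|S\|\le\frac{1}{1-\|E\|}$, and the telescoping identity passed to the limit all go through, and reading the hypothesis as a linear operator on $\mathbb{X}$ is the intended one. Note that the paper itself gives no proof of this lemma — it is quoted as a background result from Horn and Johnson — and your proof is essentially the standard argument found in that reference, so there is nothing to reconcile; at most one could shortcut in finite dimensions by noting that $(E-\Id)x=0$ with $\|E\|<1$ forces $x=0$ (injectivity, hence invertibility) and then deriving the bound directly from $\|x\|\le\|E\|\|x\|+\|(E-\Id)x\|$, but the series proof you give is complete and correct.
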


\begin{lemma}[Weyl's inequality \cite{horn_johnson:2012}, Theorem 4.3.1, Page 239]\label{weyl}
Let $A,B\colon \mathbb{X}\to \mathbb{X}$ be self-adjoint linear operators. Then, it holds
\begin{equation*}
\lambda_{\textnormal{min}}(A)+\lambda_{\textnormal{min}}(B)\leq\lambda_{\textnormal{min}}(A+B)\leq\lambda_{\textnormal{max}}(A+B)\leq\lambda_{\textnormal{max}}(A)+\lambda_{\textnormal{max}}(B).
\end{equation*}
\end{lemma}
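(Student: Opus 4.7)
The plan is to reduce all three inequalities to the Rayleigh quotient characterization of extreme eigenvalues of a self-adjoint operator on a finite-dimensional inner product space. Specifically, I would first invoke the spectral theorem to obtain, for any self-adjoint $A\colon \mathbb{X}\to\mathbb{X}$, an orthonormal eigenbasis $\{v_1,\dots,v_n\}$ with eigenvalues $\lambda_{\min}(A)=\lambda_1\leq\cdots\leq\lambda_n=\lambda_{\max}(A)$. Writing $x=\sum_i \alpha_i v_i$ with $\sum_i \alpha_i^2=1$ gives $\langle Ax,x\rangle=\sum_i \lambda_i \alpha_i^2$, a convex combination of the eigenvalues, hence lying in $[\lambda_{\min}(A),\lambda_{\max}(A)]$, with the endpoints attained at $v_1$ and $v_n$. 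This establishes the two identities
\[
\lambda_{\min}(A)=\min_{\|x\|=1}\langle Ax,x\rangle,\qquad \lambda_{\max}(A)=\max_{\|x\|=1}\langle Ax,x\rangle,
\]
which are the only ingredients I need.

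Next, for the rightmost inequality, I use linearity of the inner product: for every unit vector $x\in\mathbb{X}$,
\[
\langle (A+B)x,x\rangle=\langle Ax,x\rangle+\langle Bx,x\rangle\leq \lambda_{\max}(A)+\lambda_{\max}(B).
\]
Taking the maximum of the left-hand side over the unit sphere yields $\lambda_{\max}(A+B)\leq \lambda_{\max}(A)+\lambda_{\max}(B)$. The leftmost inequality is proved analogously, using $\langle Ax,x\rangle\geq \lambda_{\min}(A)$ and $\langle Bx,x\rangle\geq \lambda_{\min}(B)$ on the unit sphere and then minimizing. The middle inequality $\lambda_{\min}(A+B)\leq \lambda_{\max}(A+B)$ is trivial, since the minimum of the Rayleigh quotient $x\mapsto \langle (A+B)x,x\rangle$ over $\|x\|=1$ cannot exceed its maximum.

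The only delicate point is setting up the Rayleigh quotient characterization, which relies on the spectral theorem for self-adjoint operators in finite dimensions; beyond that, the argument is a one-line application of linearity together with the elementary facts $\max(f+g)\leq \max f+\max g$ and $\min(f+g)\geq \min f+\min g$. Since the paper treats the spectral theorem as standard background and the statement is already cited from \cite{horn_johnson:2012}, no real obstacle remains.
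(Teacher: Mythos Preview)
Your argument is correct: the Rayleigh quotient characterization of the extreme eigenvalues, combined with linearity and the trivial inequalities $\min(f+g)\geq\min f+\min g$ and $\max(f+g)\leq\max f+\max g$, gives exactly the stated bounds, and the middle inequality is immediate. There is nothing to fix.

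Regarding comparison with the paper: the paper does not actually prove this lemma. It is stated as a preliminary result quoted from \cite{horn_johnson:2012} (Theorem~4.3.1), with no proof given. Your write-up therefore supplies more than the paper does. The argument you give is in fact the standard one found in Horn and Johnson, so there is no methodological divergence to discuss.
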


Finally, an important result to ensure the existence and uniqueness of solutions of equation \eqref{eq:gen_closconv} is the contraction mapping principle. 

\begin{theorem}[Contraction mapping principle \cite{Ortega:1987}, Thm. 8.2.2,  page 153] \label{fixedpoint}
 Let $\Phi\colon \mathbb{X} \to   \mathbb{X}$ and suppose that there exists $\lambda \in  [0,1)$ such that  $\|\Phi(y)-\Phi(x)\| \le \lambda \|y-x\|$, for all $ x, y \in  \mathbb{X}$. Then, there exists a unique $\overline x\in  \mathbb{X}$  such that $\Phi(\overline x) = \overline x$.
\end{theorem}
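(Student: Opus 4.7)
The plan is to carry out the classical Picard iteration argument. Pick an arbitrary starting point $x^0\in\mathbb{X}$ and define the sequence $x^{k+1}=\Phi(x^k)$ for $k\in\mathbb{N}$. I would first establish that $\|x^{k+1}-x^k\|\le \lambda^k\|x^1-x^0\|$ by a straightforward induction on $k$, using the contraction hypothesis at each step. From this, for any $m>k$, the triangle inequality together with the geometric series bound yields
\[
\|x^m-x^k\|\le \sum_{j=k}^{m-1}\|x^{j+1}-x^j\|\le \|x^1-x^0\|\sum_{j=k}^{m-1}\lambda^j\le \frac{\lambda^k}{1-\lambda}\|x^1-x^0\|,
\]
which tends to $0$ as $k\to\infty$. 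Hence $(x^k)$ is a Cauchy sequence.

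Since $\mathbb{X}$ is finite dimensional, it is complete, so $(x^k)$ converges to some $\overline{x}\in\mathbb{X}$. The contraction assumption implies that $\Phi$ is (Lipschitz) continuous, so passing to the limit in the recursion $x^{k+1}=\Phi(x^k)$ gives $\overline{x}=\Phi(\overline{x})$, establishing existence of a fixed point.

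For uniqueness, suppose $\overline{x}$ and $\overline{y}$ are both fixed points. Then
\[
\|\overline{x}-\overline{y}\|=\|\Phi(\overline{x})-\Phi(\overline{y})\|\le \lambda\|\overline{x}-\overline{y}\|,
\]
so $(1-\lambda)\|\overline{x}-\overline{y}\|\le 0$, and since $\lambda<1$ we conclude $\overline{x}=\overline{y}$. There is no real obstacle here; the only subtle point is the appeal to completeness of $\mathbb{X}$, which is automatic in the finite dimensional setting adopted throughout the paper.
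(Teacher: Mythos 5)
Your proof is correct: it is the classical Picard iteration argument, with the Cauchy estimate via the geometric series, completeness of the finite dimensional space $\mathbb{X}$, continuity of $\Phi$ to pass to the limit, and the standard uniqueness step. The paper does not prove this statement at all --- it simply cites it from Ortega \cite{Ortega:1987} --- and your argument is precisely the standard proof given in such references, so there is nothing to add.
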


%%%%%%%%%%%%%%%%%%%%%%%%%%%%%%%%%%%%%%%%%%%%%%%%%%%%%%%%%%%%%%%%%%%%%%%%%%%%%%%%%%%%%%%%%%%%%%
\section{On the projection mapping onto a closed and convex cone} \label{sec:gen_proj}
%%%%%%%%%%%%%%%%%%%%%%%%%%%%%%%%%%%%%%%%%%%%%%%%%%%%%%%%%%%%%%%%%%%%%%%%%%%%%%%%%%%%%%%%%%%%%

In this section, we study some useful results that will be important in the well-definiteness and global convergence of the semi-smooth Newton method for equation \eqref{eq:gen_closconv}. We begin by presenting the following result on the properties of generalized Jacobians.

\begin{theorem}\label{exis_diff_proj}
The projection operator $P_{\K}(\cdot)$ is differentiable almost everywhere. The Jacobian $P'_{\K}(x)$ (when it exists) and any generalized Jacobian $V(x)\in \p_C P_{\K}(x)$ for all $x\in \mathbb{X}$, are self-adjoint and positive semidefinite operators. Moreover, the following properties hold:
\item [ {\bf (i)}]\label{norm_diff1} $\|V(x)\| \leq 1$, $\forall V(x) \in \p_C P_{\K}(x)$ with $x \in \mathbb{X}$.
\item [ {\bf (ii)}]\label{Pxx=Px} $P_{\K}'(x)x=P_{\K}(x), \forall x \in D_{P_{\K}}.$
\item [ {\bf (iii)}]\label{Vxx=Px} $V(x)x=P_{\K}(x), \forall V(x) \in \p_C P_{\K}(x)$ with $x\in \mathbb{X}$.
\item [ {\bf (iv)}]\label{eigen_gendiff} For all $x\in \mathbb{X}$,
$$ 0 \leq \lambda_{\text{min}}(V(x)) \leq \lambda_{\text{max}}(V(x)) \leq 1, \forall V(x) \in \p_C P_{\K}(x).$$
\end{theorem}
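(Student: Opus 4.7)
My plan is to assemble the four conclusions from four largely independent ingredients. First, Rademacher's theorem applied to the nonexpansive map $P_{\K}$ gives differentiability almost everywhere and, at the same time, the operator-norm bound in (i) at every classical differentiability point. Second, realizing $P_{\K}$ as the gradient of a convex potential delivers both self-adjointness and positive semidefiniteness of $P'_{\K}$. Third, the fact that $\K$ is a cone, and hence $P_{\K}$ is positively homogeneous of degree one, produces the key identity in (ii) by differentiating $P_{\K}(tx)=tP_{\K}(x)$ in $t$ at $t=1$. Fourth, a standard approximation-and-convex-hull argument promotes everything to $\p_C P_{\K}$ and yields (iii) and (iv).

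For the convex-potential step I would work with $\phi(x):=\tfrac{1}{2}\|x\|^2-\tfrac{1}{2}d_{\K}(x)^2$, where $d_{\K}(x)=\|x-P_{\K}(x)\|$. A direct calculation shows that $\phi$ is the Fenchel conjugate of $\tfrac{1}{2}\|\cdot\|^2+\iota_{\K}$, hence is convex as a supremum of affine functions, and its gradient is $\nabla\phi(x)=x-(x-P_{\K}(x))=P_{\K}(x)$ on $D_{P_{\K}}$. The classical Jacobian $P'_{\K}(x)=\nabla^{2}\phi(x)$ is therefore self-adjoint and positive semidefinite wherever it exists, which, together with the Lipschitz bound from Rademacher, settles (i) and the self-adjointness/positive semidefiniteness claims for $P'_{\K}$.

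For (ii), at a point $x\in D_{P_{\K}}$ I would apply the chain rule to the homogeneity identity $P_{\K}(tx)=tP_{\K}(x)$ and equate the two expressions for the derivative at $t=1$, namely $P'_{\K}(x)x$ (chain rule) and $P_{\K}(x)$ (direct differentiation of the right-hand side); this uses only differentiability of $P_{\K}$ at $x$, not at $tx$ for $t\ne 1$. The passage from $P'_{\K}$ to an arbitrary $V(x)\in\p_{C}P_{\K}(x)$ in (iii) is then routine: if $V_{k}=P'_{\K}(x_{k})$ with $x_{k}\to x$, $x_{k}\in D_{P_{\K}}$, and $V_{k}\to V$, then the decomposition
\begin{equation*}
Vx-V_{k}x_{k}=(V-V_{k})x+V_{k}(x-x_{k})
\end{equation*}
together with $\|V_{k}\|\leq 1$ and continuity of $P_{\K}$ forces $Vx=\lim_{k}V_{k}x_{k}=\lim_{k}P_{\K}(x_{k})=P_{\K}(x)$. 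The identity, self-adjointness, positive semidefiniteness, and the norm bound all pass to convex combinations because the coefficients sum to one, which completes (iii); and (iv) follows instantly, since positive semidefiniteness gives $\lambda_{\text{min}}(V(x))\geq 0$, while self-adjointness combined with (i) gives $\lambda_{\text{max}}(V(x))=\|V(x)\|\leq 1$.

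The main obstacle is securing self-adjointness in a clean way; this is precisely where the convex-potential representation of $P_{\K}$ does the heavy lifting, since without a variational structure, a projection onto a convex cone would have no obvious reason to possess a symmetric derivative. A secondary technicality is making the limiting argument for the Clarke Jacobian simultaneously absorb the change of base point and of operator, which is handled by the displayed decomposition above.
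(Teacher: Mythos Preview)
Your argument is correct and follows the same overall skeleton as the paper: Rademacher for almost-everywhere differentiability and the norm bound, positive homogeneity of $P_{\K}$ for the identity in (ii), and a limit-plus-convex-combination argument to transfer everything to $\partial_C P_{\K}(x)$. There are two places where you genuinely diverge. First, for self-adjointness and positive semidefiniteness of $P'_{\K}(x)$ the paper simply invokes Proposition~2.2 of Fitzpatrick--Phelps, whereas you supply a self-contained proof via the Moreau potential $\phi(x)=\tfrac12\|x\|^2-\tfrac12 d_{\K}(x)^2$, identifying it as the Fenchel conjugate of $\tfrac12\|\cdot\|^2+\iota_{\K}$ so that $\nabla\phi=P_{\K}$ and $P'_{\K}(x)=\nabla^2\phi(x)$ inherits symmetry and positive semidefiniteness from convexity of $\phi$; this buys you independence from an external reference at the cost of a short extra computation (and an implicit appeal to the symmetry of the second Fr\'echet derivative). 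Second, for (iv) the paper bounds the eigenvalues of each limit $V_j(x)$ and then pushes the bounds through the convex combination via Weyl's inequality (Lemma~\ref{weyl}), while you observe more directly that once $V(x)$ itself is self-adjoint, positive semidefinite, and has $\|V(x)\|\le 1$, the eigenvalue bounds are immediate from $\lambda_{\max}(V(x))=\|V(x)\|$; your route is shorter and avoids Weyl entirely. The decomposition you use in (iii) is the same as the paper's, just grouped in the opposite order.
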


\begin{proof}
The fact that the projection is differentiable almost everywhere is well-known due to its non-expansiveness (that is, the projection is $1$-Lipschitz). When $P'_{\K}(x)$ exists, it is self-adjoint and positive semidefinite due to Proposition 2.2 of \cite{fitzpatrick:1982}. Now, let $x \in \mathbb{X}$ and $V(x) \in \p_C P_{\K}(x)$. By definition we have that there exist $V_1(x),\dots,V_m(x)$ and $\{x^j_k\} \subset D_{P_{\K}}$ such that $\lim_{k\to\infty}x^j_k =x$,  $P_{\K}'(x^j_k) \rightarrow V_j(x)$, $\forall j=1,\dots,m,$ and $V(x) = \sum_{j=1}^m \alpha_jV_j(x)$, with $\sum_{j=1}^m \alpha_j =1$ and $\alpha_j \in [0,1]$ for all $j$. By the continuity of the inner product, we can deduce that each $V_j(x)$ is also self-adjoint and positive semidefinite, and therefore, by linearity, the same holds true for $V(x)$. 

To prove item (i), note that $\|P_{\K}'(x^j_k)\| \leq 1$ for all $j$ and $k$ due to non-expansiveness. Hence
\begin{align*}
\|V(x)\| &= \left\|\sum_{j=1}^m \alpha_j V_j(x) \right\| \leq \sum_{j=1}^m \alpha_j \|V_j(x)\| \\
&=\sum_{j=1}^m \alpha_j \lim_{k \to \infty}\|P_{\K}'(x^j_k)\|\leq1.
\end{align*}

For item (ii), note that since $\K$ is a cone, $P_{\K}(\cdot)$ is positive homogeneous, that is, $P_{\K}(tx) = tP_{\K}(x), \forall t \geq 0$ for any $x\in \mathbb{X}$. Let $x\in D_{P_{\K}}$. If $x=0$, the equality is evident. Assume that  $x \neq 0$. By the definition of $P^\prime_{\K}(x)$, we have that
\begin{align*}
0 &= \lim_{t \to 0}{\frac{\|P_{\K}(x+tx)-P_{\K}(x)-tP_{\K}'(x)x\|}{\|tx\|}}
= \frac{\|P_{\K}(x)-P_{\K}'(x)x\|}{\|x\|}.
\end{align*} Hence $P_{\K}'(x)x = P_{\K}(x)$, which proves item (ii).

In order to prove item (iii), by noting that $V(x)x-P_{\K}(x) = \sum_{j=1}^m \alpha_j (V_j(x)x-P_{\K}(x))$, it is enough to show that for all $j=1,\dots,m$, $V_j(x)x=P_{\K}(x)$. Recalling that $P'_{\K}(x^j_k)\to V_j(x)$, we have that
\begin{align*}
\|V_j(x)x-P_{\K}'(x^j_k)x^j_k\| &\leq \|V_j(x)x-V_j(x)x^j_k\|+\|V_j(x)x^j_k-P_{\K}'(x^j_k)x^j_k\|\\
&\leq \|V_j(x)\|\|x-x^j_k\|+\|V_j(x)-P_{\K}'(x^j_k)\|\|x^j_k\|\\
&\rightarrow 0.
\end{align*} Using item (ii) and the continuity of $P_{\K}(\cdot)$ we conclude that
\begin{align*}
\|V_j(x)x-P_{\K}(x)\| &\leq \|V_j(x)x-P_{\K}(x^j_k)\| + \|P_{\K}(x^j_k)-P_{\K}(x)\|\\
&= \|V_j(x)x-P_{\K}'(x^j_k)x^j_k\| + \|P_{\K}(x^j_k)-P_{\K}(x)\| \rightarrow 0.
\end{align*}
Finally, for item (iv), it is enough to note that $0\leq\lambda_{\text{min}}(V_j(x))\leq\lambda_{\text{max}}(V_j(x))\leq1$ for all $j=1,\dots,m$ due to the fact that $V_j(x)$ is self-adjoint and positive semidefinite with $\|V_j(x)\|\leq1$. The result now follows easily from Lemma \ref{weyl}.
\end{proof}

We conclude the section with the following useful result.

\begin{lemma}\label{norm_diff2}
Let $x, y \in \mathbb{X}$ and $V(x) \in\p_C P_{\K}(x)$. Then $\|P_{\K}(y)-P_{\K}(x)-V(x)(y-x)\| \leq \|y-x\|$.
\end{lemma}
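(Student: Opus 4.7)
My plan is to apply the Mean Value Theorem (Theorem \ref{meanval_theo}) to the non-expansive mapping $P_{\K}$: this produces an operator $U$, lying in the convex hull of $\partial_C P_{\K}(z)$ as $z$ ranges over the segment $[x,y]$, such that
\[
P_{\K}(y) - P_{\K}(x) = U(y-x).
\]
This reduces the entire lemma to the operator-norm estimate $\|U - V(x)\| \leq 1$, after which the result follows immediately from $\|(U-V(x))(y-x)\| \leq \|U-V(x)\|\,\|y-x\|$.

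The key observation I would use is that $U$ and $V(x)$ share strong spectral structure. By item (iv) of Theorem \ref{exis_diff_proj}, every element $W \in \partial_C P_{\K}(z)$ is self-adjoint with eigenvalues in $[0,1]$, equivalently $0 \leq \langle Wv,v\rangle \leq \|v\|^2$ for all $v$. Since this quadratic-form bound is preserved under convex combinations, $U$ inherits it, and the same holds for $V(x)$ directly from (iv). Then $-V(x)$ is self-adjoint with eigenvalues in $[-1,0]$, and Weyl's inequality (Lemma \ref{weyl}) applied to $U + (-V(x))$ gives
\[
-1 \leq \lambda_{\min}(U)+\lambda_{\min}(-V(x)) \leq \lambda_{\min}(U-V(x)) \leq \lambda_{\max}(U-V(x)) \leq \lambda_{\max}(U)+\lambda_{\max}(-V(x)) \leq 1.
\]
Since $U - V(x)$ is self-adjoint its operator norm equals its spectral radius, yielding $\|U - V(x)\| \leq 1$.

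The main obstacle to anticipate is that the naive triangle-inequality approach, bounding $\|P_{\K}(y)-P_{\K}(x)\|$ by non-expansiveness and $\|V(x)(y-x)\|$ by item (i) of Theorem \ref{exis_diff_proj}, only yields the constant $2\|y-x\|$. Securing the sharp constant $1$ really forces one to represent $P_{\K}(y)-P_{\K}(x)$ itself as a positive semidefinite operator acting on $y-x$ via the Mean Value Theorem, and to exploit not merely the norm bound $\|W\|\leq 1$ but the stronger fact that both $U$ and $V(x)$ lie in the Löwner interval $[0,\Id]$; this is what forces the spectrum of $U-V(x)$ into $[-1,1]$ rather than only $[-2,2]$.
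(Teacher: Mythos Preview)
Your proposal is correct and follows essentially the same route as the paper: apply the Mean Value Theorem to write $P_{\K}(y)-P_{\K}(x)$ as an operator in (the convex hull of) $\partial_C P_{\K}$ along the segment acting on $y-x$, then bound the difference of two self-adjoint operators with spectrum in $[0,1]$ via Weyl's inequality to get $\|U-V(x)\|\leq 1$. The only minor difference is that you treat the Mean Value output as a genuine convex combination over the segment and explicitly argue that the spectral bounds pass to such combinations, whereas the paper invokes its (slightly looser) restatement of Theorem~\ref{meanval_theo} to get a single $V(z)\in\partial_C P_{\K}(z)$ directly; your version is the more careful of the two, but the substance is identical.
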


\begin{proof}
By Theorem \ref{meanval_theo} we have that
$$P_{\K}(y)-P_{\K}(x)-V(x)(y-x) = (V(z)-V(x))(y-x),$$ with $V(z) \in \p_C P_{\K}(z)$, where $z$ is a convex combination of $x$ and $y$. The result follows from the fact that $\|V(z)-V(x)\|\leq1$ due to Lemma \ref{weyl} and Theorem \ref{exis_diff_proj} item (iv).
\end{proof}

Item~(ii) from Theorem~\ref{exis_diff_proj} provides a foundation for introducing the semi-smooth Newton method for solving equation \eqref{eq:gen_closconv}. Specifically, since the projection can be expressed as 
\[ P_{\mathcal{K}}(x) = V(x)x, \]
where \( V(x) \in \partial_C P_{\mathcal{K}}(x) \), we have that $F(x)$ as defined in \eqref{eq:fucpw} can be written as $$F(x)=(V(x)+T)x-b,$$ with $V(x)+T\in \p_C F(x)$. Then, iteration \eqref{eq:newtonclassical} can be expressed as $$x^{k+1}=x^k-(V(x^k)+T)^{-1}[(V(x^k)+T)x^k-b]=(V(x^k)+T)^{-1}b.$$ In the next section, we explore the convergence properties of this iteration.

%%%%%%%%%%%%%%%%%%%%%%%%%%%%%%%%%%%%%%%%%%%%%%%%%%%%%%%%%%%%%%%%%%%%%%%%%%%%%%%%%%%%%%%%%%%%%%%%%%%%%%%%%
\section{A semi-smooth Newton method for general projection equations} \label{sec:newt_proj}
%%%%%%%%%%%%%%%%%%%%%%%%%%%%%%%%%%%%%%%%%%%%%%%%%%%%%%%%%%%%%%%%%%%%%%%%%%%%%%%%%%%%%%%%%%%%%%%%%%%%%%%%%

In this section, we define a semi-smooth Newton method for solving equation \eqref{eq:gen_closconv} and study the convergence along with the sufficient conditions required to achieve it. Our goal is to extend the application of the semi-smooth Newton method, previously studied in \cite{Barrios:2016, Bello-Cruz:2017}, for the cases where $\K\subseteq\R^n$ is either the non-negative orthant or Lorentz's cone. This extension considers any closed and convex cone $\K\subseteq \mathbb{X}$. First, we establish a sufficient condition to the existence and uniqueness of the solution to the equation \eqref{eq:gen_closconv}.

\begin{theorem}[Sufficient condition for existence and uniqueness of a solution] \label{exi_uni_sol}
If $T$ is invertible and $\|T^{-1}\|<1$, then equation \eqref{eq:gen_closconv} has a unique solution for any $b \in \mathbb{X}$.
\end{theorem}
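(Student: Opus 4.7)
The plan is to reduce the problem to a fixed-point equation and apply the contraction mapping principle (Theorem \ref{fixedpoint}). Since $T$ is invertible, I can rewrite $P_{\K}(x)+Tx=b$ as $x = T^{-1}b - T^{-1}P_{\K}(x)$, and define the map $\Phi\colon\mathbb{X}\to\mathbb{X}$ by
\begin{equation*}
\Phi(x) := T^{-1}b - T^{-1}P_{\K}(x).
\end{equation*}
Solutions of \eqref{eq:gen_closconv} are exactly the fixed points of $\Phi$, so it suffices to show $\Phi$ has a unique fixed point.

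Next, I would verify that $\Phi$ is a contraction. For any $x,y\in\mathbb{X}$, using linearity of $T^{-1}$ and the operator norm bound,
\begin{equation*}
\|\Phi(y)-\Phi(x)\| = \|T^{-1}(P_{\K}(x)-P_{\K}(y))\| \leq \|T^{-1}\|\,\|P_{\K}(y)-P_{\K}(x)\|.
\end{equation*}
The projection onto a closed convex set is non-expansive (this is the 1-Lipschitz property already invoked in the proof of Theorem \ref{exis_diff_proj}), so $\|P_{\K}(y)-P_{\K}(x)\|\leq\|y-x\|$. Combining, $\|\Phi(y)-\Phi(x)\|\leq\|T^{-1}\|\,\|y-x\|$, and by hypothesis $\lambda:=\|T^{-1}\|\in[0,1)$.

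Finally, Theorem \ref{fixedpoint} then yields a unique $\bar x\in\mathbb{X}$ with $\Phi(\bar x)=\bar x$, which is the unique solution of \eqref{eq:gen_closconv} for the given $b$. There is essentially no obstacle here: the only ingredients are the invertibility of $T$ to write the fixed-point reformulation, non-expansiveness of $P_{\K}$, and the contraction constant $\|T^{-1}\|<1$. The proof is short and the key conceptual step is simply choosing to move $Tx$ to the other side rather than trying to work directly with $F(x)=P_{\K}(x)+Tx-b$.
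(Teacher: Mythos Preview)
Your proof is correct and essentially identical to the paper's own argument: the paper defines the same map $\Phi(x)=-T^{-1}P_{\K}(x)+T^{-1}b$, uses non-expansiveness of $P_{\K}$ together with $\|T^{-1}\|<1$ to show $\Phi$ is a contraction, and invokes Theorem~\ref{fixedpoint}.
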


\begin{proof}
Equation \eqref{eq:gen_closconv} has a unique solution if and only if the mapping $\Phi(x) =-T^{-1}P_{\K}(x) +T^{-1}b$ has a unique fixed point. Hence, it is sufficient to prove that $\Phi$ is a contraction and use Theorem \ref{fixedpoint} to guarantee the existence and uniqueness of a fixed point. From the definition of $\Phi$, we have
$$\Phi(x) - \Phi(y) = -T^{-1}(P_{\K}(x) - P_{\K}(y)).$$ Since $\|P_{\K}(x) - P_{\K}(y)\| \leq \|x-y\|$ we deduce that $\|\Phi(x) - \Phi(y)\| \leq \|T^{-1}\| \|x-y\|$ concluding that $\Phi$ is a contraction since $\|T^{-1}\| <1$. 
\end{proof}

We define the semi-smooth Newton method for the mapping $F(x)=P_{\K}(x)+ Tx - b$ starting on an initial point $x^0\in \mathbb{X}$ as the iteration

\begin{equation} \label{eq:semismnewt}
(V(x^k) + T)x^{k+1}=b, \; k\in\N
\end{equation} with $V(x) \in \p_C P_{\K}(x)$.

Notice that if $x^k \rightarrow \overline{x}$, then $\overline{x}$ is a solution of equation \eqref{eq:gen_closconv}. To see this,  we subtract $(V(x^k)+T)x^k$ from both sides of \eqref{eq:semismnewt}. Using that $P_{\K}(x^k)=V(x^k)x^k$, we arrive at $(V(x^k) + T)(x^{k+1}-x^k)=b-P_{\K}(x^k)-Tx^k$. Since $V(x^k)+T$ is bounded, the left-hand side converges to zero, while from the continuity of the projection the right-hand side converges to $b-P_{\K}(\overline{x})-T\overline{x}$. Therefore, $\overline{x}$ is a solution of \eqref{eq:gen_closconv}.

We start by showing a sufficient condition for stopping the method \eqref{eq:semismnewt} at a solution.

\begin{proposition}[Stopping criterion]
If $V(x^{k+1}) = V(x^k)$, then $x^{k+1}$ is a solution of equation \eqref{eq:gen_closconv}.
\end{proposition}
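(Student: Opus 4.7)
The plan is to exploit the identity from Theorem~\ref{exis_diff_proj}(iii), which states that $V(x)x = P_{\K}(x)$ for any $V(x)\in\p_C P_{\K}(x)$ and any $x\in\mathbb{X}$. This identity is essentially the bridge between the linear equation solved at each Newton step and the nonlinear equation \eqref{eq:gen_closconv} we ultimately want to satisfy.

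Concretely, I would start from the defining iteration \eqref{eq:semismnewt}, which reads $V(x^k)x^{k+1} + Tx^{k+1} = b$. The hypothesis $V(x^{k+1}) = V(x^k)$ lets me substitute $V(x^{k+1})$ for $V(x^k)$ on the left-hand side, producing
\begin{equation*}
V(x^{k+1})x^{k+1} + Tx^{k+1} = b.
\end{equation*}
Now applying Theorem~\ref{exis_diff_proj}(iii) at the point $x^{k+1}$ yields $V(x^{k+1})x^{k+1} = P_{\K}(x^{k+1})$, and substituting this gives $P_{\K}(x^{k+1}) + Tx^{k+1} = b$, which is precisely equation \eqref{eq:gen_closconv} evaluated at $x^{k+1}$.

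There is essentially no obstacle here: the whole content of the proof is the observation that item~(iii) of Theorem~\ref{exis_diff_proj} is valid pointwise for every generalized Jacobian (not only for the specific one produced by the algorithm), so once the Newton step's left-hand side can be rewritten using the generalized Jacobian at $x^{k+1}$ itself, the linear equation automatically collapses to the nonlinear projection equation. The only thing worth noting explicitly in the write-up is why one is allowed to make the replacement $V(x^k)x^{k+1} = V(x^{k+1})x^{k+1}$, which is immediate from the stated equality of operators.
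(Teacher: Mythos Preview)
Your proof is correct and follows essentially the same approach as the paper's: both start from the defining iteration \eqref{eq:semismnewt}, use the hypothesis $V(x^{k+1})=V(x^k)$ to replace the generalized Jacobian, and invoke Theorem~\ref{exis_diff_proj}(iii) to identify $V(x^{k+1})x^{k+1}$ with $P_{\K}(x^{k+1})$. The paper simply writes the chain of equalities $P_{\K}(x^{k+1})+Tx^{k+1}=(V(x^{k+1})+T)x^{k+1}=(V(x^k)+T)x^{k+1}=b$ in the reverse order from yours.
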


\begin{proof}
From Theorem \ref{exis_diff_proj} item (iii) and \eqref{eq:semismnewt}, we have that
\begin{equation*}
P_{\K}(x^{k+1}) + Tx^{k+1} = (V(x^{k+1}) + T)x^{k+1}
= (V(x^k) + T)x^{k+1} 
= b.
\end{equation*} %Therefore $x^{k+1}$ is a solution.
\end{proof}
Now, we show a sufficient condition for the global convergence of iteration \eqref{eq:semismnewt}. Provided certain conditions regarding the norm of the inverse of $T$ are met, we can guarantee the existence and uniqueness of the solution of equation \eqref{eq:gen_closconv}. In addition to that, by imposing an additional norm condition, we obtain linear global convergence.

\begin{theorem} [Sufficient condition for global Q-linear convergence]\label{linconvergnewt}
Let $b \in \mathbb{X} $ and $T\colon \mathbb{X} \rightarrow \mathbb{X}$ be an invertible linear operator. Assume that $\|T^{-1}\|<1$. Then, equation \eqref{eq:gen_closconv} has a unique solution $\overline{x}$ and for any initial point $x^0$, the semi-smooth Newton sequence generated by equation \eqref{eq:semismnewt} is well-defined. Additionally, if $\|T^{-1}\|< \frac{1}{2}$ then the sequence $\{x^k\}$converges $Q$-linearly to $\overline{x}$ and satisfies
$$ \|x^{k+1}-\overline{x}\| \leq \frac{\|T^{-1}\|}{1-\|T^{-1}\|}\|x^{k}-\overline{x}\|, \; k\in\N.$$
\end{theorem}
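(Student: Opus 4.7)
The plan is to combine the existence/uniqueness result already established with a careful linearization of the iteration around the fixed point. Existence and uniqueness of $\overline{x}$ is immediate from Theorem \ref{exi_uni_sol}, so the core of the proof is (a) well-definedness of \eqref{eq:semismnewt}, and (b) the $Q$-linear estimate under the stronger hypothesis $\|T^{-1}\|<1/2$.

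For well-definedness, I would show $V(x^k)+T$ is invertible for every $k$ by factoring
$$V(x^k)+T = T\bigl(\Id + T^{-1}V(x^k)\bigr).$$
Since $V(x^k)\in\partial_C P_{\K}(x^k)$ satisfies $\|V(x^k)\|\le 1$ by Theorem \ref{exis_diff_proj}(i), we have $\|{-}T^{-1}V(x^k)\|\le\|T^{-1}\|<1$, so Banach's Lemma \ref{banachlemma} yields invertibility together with the bound
$$\bigl\|(V(x^k)+T)^{-1}\bigr\| \;=\; \bigl\|(\Id+T^{-1}V(x^k))^{-1}T^{-1}\bigr\| \;\le\; \frac{\|T^{-1}\|}{1-\|T^{-1}\|}.$$
Hence $x^{k+1}=(V(x^k)+T)^{-1}b$ is well-defined for any starting point.

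For the rate, the key step is to build an error recursion that lets Lemma \ref{norm_diff2} be applied. Writing $P_{\K}(\overline{x})+T\overline{x}=b$, subtracting from \eqref{eq:semismnewt}, and using the identity $V(x^k)x^k=P_{\K}(x^k)$ from Theorem \ref{exis_diff_proj}(iii), I would obtain
$$(V(x^k)+T)(x^{k+1}-\overline{x}) \;=\; P_{\K}(\overline{x}) - V(x^k)\overline{x} \;=\; P_{\K}(\overline{x}) - P_{\K}(x^k) - V(x^k)(\overline{x}-x^k).$$
By Lemma \ref{norm_diff2}, the right-hand side has norm at most $\|x^k-\overline{x}\|$. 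Combining with the operator-norm bound above gives
$$\|x^{k+1}-\overline{x}\| \;\le\; \frac{\|T^{-1}\|}{1-\|T^{-1}\|}\,\|x^k-\overline{x}\|,$$
which is exactly the claimed inequality. Under $\|T^{-1}\|<1/2$, the contraction factor $\|T^{-1}\|/(1-\|T^{-1}\|)$ is strictly less than $1$, giving $Q$-linear convergence to $\overline{x}$.

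The only non-routine moment is setting up the right-hand side of the error equation so that Lemma \ref{norm_diff2} applies; once the "$\pm V(x^k)x^k=\pm P_{\K}(x^k)$" insertion is made, everything reduces to Banach's Lemma and the generalized-Jacobian bounds from Section \ref{sec:gen_proj}. No separate argument is needed for global convergence: the bound above is unconditional in $k$ and does not require $x^0$ to be close to $\overline{x}$.
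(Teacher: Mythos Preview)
Your proof is correct and follows essentially the same route as the paper: invoke Theorem~\ref{exi_uni_sol} for existence/uniqueness, factor $V(x^k)+T=T(\Id+T^{-1}V(x^k))$ and use Banach's Lemma together with $\|V(x^k)\|\le1$ for well-definedness and the bound $\|(V(x^k)+T)^{-1}\|\le\|T^{-1}\|/(1-\|T^{-1}\|)$, then derive the error identity via $V(x^k)x^k=P_{\K}(x^k)$ and apply Lemma~\ref{norm_diff2}. The only cosmetic difference is that the paper establishes the operator-norm bound after the error identity rather than before.
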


\begin{proof}
First we know from Theorem \ref{exis_diff_proj} that $\|V(x)\| \leq 1$ for any $x \in \mathbb{X}$. Since $\|T^{-1}\| < 1$ we deduce that $\|T^{-1}V(x)\|<1$ for every $x \in \mathbb{X}$. Lemma \ref{banachlemma} implies that $-T^{-1}V(x)-\Id$ is invertible and therefore $V(x)+T$ is also invertible. % since
%$$ V(x)+T=-T(-T^{-1}V(x)-\Id), \forall x \in X. $$
In particular the semi-smooth Newton method \eqref{eq:semismnewt} is well defined. Let $\overline{x}$ be the only solution of problem \eqref{eq:gen_closconv} (which exists and is unique due to Theorem \ref{exi_uni_sol}). So, this point satisfies the relation $(V(\overline{x})+T)\overline{x}-b = 0$. Combining with \eqref{eq:semismnewt} we deduce that
$$(V(x^k)+T)(x^{k+1}-\overline{x})=(V(\overline{x})-V(x^k))\overline{x}=V(\overline{x})\overline{x}-V(x^k)x^k-V(x^k)(\overline{x}-x^k).$$
Since $V(\overline{x})\overline{x}=P_{\K}(\overline{x})$ and $V(x^k)x^k=P_{\K}(x^k)$, using Lemma \ref{norm_diff2}, we obtain
$$ \| x^{k+1}  - \overline{x}\| \leq \|(V(x^k) + T)^{-1}\| \| (P_{\K}(\overline{x}) - P_{\K}(x^k) - V(x^k)(\overline{x} - x^k))\|\leq\|(V(x^k) + T)^{-1}\| \| \overline{x}-x^k\|.$$
%Using the same algebraic manipulation of \cite{BelloCruz:2017} we have
But $ \|(V(x^k) + T)^{-1} \| = \| (T(T^{-1}V(x^k)+\Id))^{-1}\| \leq \| (T^{-1}V(x^k) - \Id)^{-1} \|\| T^{-1}\|.$ Lemma \ref{banachlemma} and $\| T^{-1}V(x^k) \| < 1$ implies that
$$ \| (T^{-1}V(x^k) - \Id)^{-1} \| \leq \frac{1}{1-\|T^{-1}V(x^k)\|} \leq \frac{1}{1-\|T^{-1}\|}.$$ %Therefore, combining the previous expression, the fact that $\|T^{-1}V(x^k)\| \leq \|T^{-1}\|$ and the hypothesis that $\|T^{-1}\| < \frac{1}{2}$ we conclude that
Thus, we have that $ \| x^{k+1}  - \overline{x}\| \leq \frac{\|T^{-1}\|}{1-\|T^{-1}\|} \|x^k-\overline{x}\|$ with $\frac{\|T^{-1}\|}{1-\|T^{-1}\|} <1$ due to the assumption that $\|T\|<\frac{1}{2}$. Hence, $x^k$ converges Q-linearly to the unique solution $\overline{x}$.
\end{proof}

The previous result states that with only a norm condition on the operator $T^{-1}$, namely $\|T^{-1}\| < \frac{1}{2}$, we can achieve Q-linear convergence of the method. We prove next that for the case where $T$ being a positive definite linear mapping, the weaker norm condition $\|T^{-1}\| <1$ is sufficient to ensure Q-linear convergence of the iteration \eqref{eq:semismnewt} to the unique solution of the problem \eqref{eq:gen_closconv}.

\begin{theorem} \label{linconvergnewt_posdef}
Let $b \in \mathbb{X} $ and $T\colon \mathbb{X} \rightarrow \mathbb{X}$ be a positive definite linear operator. Then, equation \eqref{eq:gen_closconv} has a unique solution $\overline{x}\in \mathbb{X}$ and for any initial point $x^0\in \mathbb{X}$, the semi-smooth Newton sequence generated by \eqref{eq:semismnewt} is well-defined. Additionally, if $\|T^{-1}\|< 1$ then the sequence $\{x^k\}$converges $Q$-linearly to $\overline{x}$ and satisfies
$$ \|x^{k+1}-\overline{x} \| \leq \|T^{-1}\|\|x^{k}-\overline{x}\|, \; k\in\N.$$
\end{theorem}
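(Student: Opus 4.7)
The plan is to follow the skeleton of the proof of Theorem \ref{linconvergnewt}, but to replace the Banach-lemma estimate of $\|(V(x^k)+T)^{-1}\|$ by a sharper spectral bound that exploits the self-adjoint positive definite structure of $V(x^k)+T$. This is precisely what allows the contraction factor to drop from $\|T^{-1}\|/(1-\|T^{-1}\|)$ down to $\|T^{-1}\|$.

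First I would handle existence, uniqueness, and well-definedness. Since $T$ is positive definite it is invertible, and by hypothesis $\|T^{-1}\|<1$, so Theorem \ref{exi_uni_sol} produces a unique solution $\overline{x}$ of \eqref{eq:gen_closconv}. For every $x^k$, Theorem \ref{exis_diff_proj} tells us that $V(x^k)$ is self-adjoint and positive semidefinite, and $T$ is self-adjoint and positive definite by assumption, so $V(x^k)+T$ is self-adjoint and positive definite, hence invertible. Therefore the iteration \eqref{eq:semismnewt} is well defined.

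Next I would reproduce the recursion from the proof of Theorem \ref{linconvergnewt}. Subtracting $(V(\overline{x})+T)\overline{x}=b$ from $(V(x^k)+T)x^{k+1}=b$ and using $V(\overline{x})\overline{x}=P_{\K}(\overline{x})$ and $V(x^k)x^k=P_{\K}(x^k)$ from Theorem \ref{exis_diff_proj}(iii) gives
\[
(V(x^k)+T)(x^{k+1}-\overline{x}) \;=\; P_{\K}(\overline{x})-P_{\K}(x^k)-V(x^k)(\overline{x}-x^k).
\]
Applying Lemma \ref{norm_diff2} to the right-hand side yields
\[
\|x^{k+1}-\overline{x}\| \;\le\; \|(V(x^k)+T)^{-1}\|\,\|x^k-\overline{x}\|.
\]

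The crux of the argument, and where I depart from Theorem \ref{linconvergnewt}, is the estimate of $\|(V(x^k)+T)^{-1}\|$. Since $V(x^k)+T$ is self-adjoint and positive definite, $\|(V(x^k)+T)^{-1}\|=1/\lambda_{\min}(V(x^k)+T)$. By Weyl's inequality (Lemma \ref{weyl}) combined with Theorem \ref{exis_diff_proj}(iv), which guarantees $\lambda_{\min}(V(x^k))\ge 0$, we obtain
\[
\lambda_{\min}(V(x^k)+T) \;\ge\; \lambda_{\min}(V(x^k)) + \lambda_{\min}(T) \;\ge\; \lambda_{\min}(T).
\]
Since $T$ is self-adjoint and positive definite, $\|T^{-1}\|=1/\lambda_{\min}(T)$, hence $\|(V(x^k)+T)^{-1}\| \le \|T^{-1}\|$. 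Substituting into the recursion yields the claimed bound
\[
\|x^{k+1}-\overline{x}\| \;\le\; \|T^{-1}\|\,\|x^k-\overline{x}\|,
\]
and Q-linear convergence to $\overline{x}$ follows from $\|T^{-1}\|<1$. The only subtlety, and the main thing to verify carefully, is the identification of $\|T^{-1}\|$ with $1/\lambda_{\min}(T)$ and of $\|(V(x^k)+T)^{-1}\|$ with $1/\lambda_{\min}(V(x^k)+T)$; both are standard once self-adjointness and positive definiteness have been established, which was done in the first paragraph.
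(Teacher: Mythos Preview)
Your argument for the Q-linear convergence part is essentially the paper's: you reproduce the recursion from Theorem~\ref{linconvergnewt} and then bound $\|(V(x^k)+T)^{-1}\|$ by $\|T^{-1}\|$ via Weyl's inequality and Theorem~\ref{exis_diff_proj}(iv). That is exactly what the paper does in its final paragraph, and your write-up of that step is fine.

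There is, however, a genuine gap in your existence and uniqueness argument. The theorem is structured in two parts: (i) if $T$ is positive definite then \eqref{eq:gen_closconv} has a unique solution and the Newton iteration is well defined; (ii) \emph{additionally}, if $\|T^{-1}\|<1$ then Q-linear convergence holds. You establish (i) by writing ``by hypothesis $\|T^{-1}\|<1$, so Theorem~\ref{exi_uni_sol} produces a unique solution'', but $\|T^{-1}\|<1$ is \emph{not} a hypothesis for part (i); it only enters for part (ii). So as written you have not proved the first assertion of the theorem. (Your well-definedness argument, by contrast, is fine and does not use $\|T^{-1}\|<1$.)

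The paper fills this gap with a different fixed-point reformulation that works under positive definiteness alone: using Moreau's decomposition $x=P_{\K}(x)-P_{\K^*}(-x)$, one checks that $x$ solves \eqref{eq:gen_closconv} if and only if it is a fixed point of $\Phi(x)=(\Id+T)^{-1}(b-P_{\K^*}(-x))$. Since $T$ is self-adjoint and positive definite, $\lambda_{\min}(\Id+T)\ge 1+\lambda_{\min}(T)>1$ by Lemma~\ref{weyl}, so $\|(\Id+T)^{-1}\|<1$ and $\Phi$ is a contraction by nonexpansiveness of $P_{\K^*}$. This yields existence and uniqueness without any norm condition on $T^{-1}$, which is precisely the point of the theorem over Theorem~\ref{linconvergnewt}.
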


\begin{proof}
First notice that from Lemma \ref{weyl} and the positive definiteness of $T$, it follows that $\Id + T$ is invertible with $\|(\Id + T)^{-1}\|<1$. Using Moreau's decomposition \cite[Theorem 3.2.5]{Hiriart-Urruty:1993}, we can write any $x\in \mathbb{X}$ as $x = P_{\K}(x) - P_{\K^*}(-x)$. Now, it follows directly that $x$ is a solution of equation \eqref{eq:gen_closconv} if and only if $x$ is a fixed point of $\Phi(x) = (\Id+T)^{-1}(b-P_{\K^*}(-x))$.
Since
$$ \Phi(x)-\Phi(y) = (\Id + T)^{-1}(-P_{\K^*}(-x)+P_{\K^*}(-y)), \quad x,y \in \mathbb{X},$$ we deduce that $\Phi$ is a contraction due to the non-expansiveness of the projection. This gives existence and uniqueness of a solution of problem \eqref{eq:gen_closconv}.

By Theorem \ref{exis_diff_proj} item (iv) and Lemma \ref{weyl}, it follows that $V(x)+T$ is positive definite with $\|(V(x)+T)^{-1}\|\leq\|T^{-1}\|$ for all $x\in \mathbb{X}$. Hence, iteration \eqref{eq:semismnewt} is well-defined for every starting point $x^0\in \mathbb{X}$. The Q-linear convergence when $\|T^{-1}\|<1$ now follows from the relation $\|x^{k+1}-\overline{x}\|\leq\|(V(x^k)+T)^{-1}\|\|x^k-\overline{x}\|$ deduced in the proof of Theorem \ref{linconvergnewt}.
\end{proof}

Note that although the assumption of positive definiteness is sufficient for existence and uniqueness of the solution without imposing a condition on the norm of $T^{-1}$, it does not guarantee the convergence of Newton's method; see \cite[Example 1]{armijo:2023}.

In the following section, we show relevant applications of our results to quadratic conic programming.

%%%%%%%%%%%%%%%%%%%%%%%%%%%%%%%%%%%%%%%%%%%%%%%%%%%%%%%%%%%%%%%%%%%%%%%%%%%%%%%%%%%%%%%%%%%%%%%%%%%%%%%%%
\section{Application to quadratic conic programming} \label{sec:quad_app}
%%%%%%%%%%%%%%%%%%%%%%%%%%%%%%%%%%%%%%%%%%%%%%%%%%%%%%%%%%%%%%%%%%%%%%%%%%%%%%%%%%%%%%%%%%%%%%%%%%%%%%%%%

In this section, we connect equation \eqref{eq:gen_closconv} with the important quadratic conic programming problem

\begin{equation} \label{quad_conic_prog}
%\left(
\begin{matrix}
\text{min} & \frac{1}{2}\langle x,Qx \rangle + \langle q,x \rangle, \\
\text{s.t.} & x \in \K.
\end{matrix}
%\right)
\end{equation}

This problem has been widely studied and has multiple applications such as semidefinite least squares and, in particular, the nearest correlation matrix problem which we will present next.

The Lagrangian of the problem is given by
$$ L(x,\mu) := \frac{1}{2}\langle x,Qx \rangle + \langle q,x \rangle - \langle\mu,x\rangle,$$ where $\mu \in \K^*$. Then, the well-known complementary KKT conditions are given by
\begin{align*}
Q\overline{x}+q-\overline{\mu} &= 0,\\
\langle\overline{\mu},\overline{x}\rangle &= 0.
\end{align*} Or equivalently,
\begin{equation} \label{eq:kkt}
\langle Q\overline{x}+q,\overline{x} \rangle=0 \text{, }\text{ }\text{ }\text{ }Q\overline{x}+q \in \K^*\text{, }\text{ }\text{ }\text{ }\overline{x} \in \K.
\end{equation}

In order to find a solution to the KKT system \eqref{eq:kkt}, we use the following modified projection equation: 
\begin{equation} \label{eq:gen_quadprog}
(Q-\Id)P_{\K}(x)+x=-q.
\end{equation}

With this in mind, we have the following connection between the solutions of  \eqref{eq:gen_quadprog} and the ones of the KKT conditions \eqref{eq:kkt} above. The following theorem is a generalization of Proposition 4 in \cite{Bello-Cruz:2017}.

\begin{theorem} [KKT points and solutions of a generalized projection equation] \label{solKKT_iff_solQuadEq} 
If $x$ is solution of equation \eqref{eq:gen_quadprog}, then $\overline{x}=P_{\K}(x)$ is a solution of the KKT system \eqref{eq:kkt}. On the other hand, if $\overline{x}$ is a solution of system \eqref{eq:kkt}, then $x = \overline{x} - (Q\overline{x}+q)$ is a solution of \eqref{eq:gen_quadprog}.
\end{theorem}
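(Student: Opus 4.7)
My plan is to leverage Moreau's decomposition theorem, already invoked in the proof of Theorem \ref{linconvergnewt_posdef}: for every $x \in \mathbb{X}$, one has $x = P_{\K}(x) - P_{\K^*}(-x)$ with $\langle P_{\K}(x), P_{\K^*}(-x)\rangle = 0$; moreover, this is the \emph{unique} way to write $x$ as a difference $u - v$ with $u \in \K$, $v \in \K^*$, and $\langle u,v\rangle = 0$. I would prove both implications by applying this fact in two symmetric ways.

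For the forward implication, I would let $x$ solve \eqref{eq:gen_quadprog}, set $\overline{x} = P_{\K}(x)$, and rewrite the equation as $Q\overline{x}+q = \overline{x}-x = P_{\K}(x)-x$. Moreau's decomposition then identifies the right-hand side with $P_{\K^*}(-x)$, so $Q\overline{x}+q$ lies in $\K^*$ and is orthogonal to $\overline{x}=P_{\K}(x)$, yielding simultaneously all three conditions of \eqref{eq:kkt}.

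For the reverse direction, given $\overline{x}$ satisfying \eqref{eq:kkt} and $x := \overline{x} - (Q\overline{x}+q)$, my plan is to observe that the very expression for $x$ is a valid Moreau decomposition: $\overline{x} \in \K$, $Q\overline{x}+q \in \K^*$, and $\langle \overline{x}, Q\overline{x}+q\rangle = 0$. Uniqueness of the decomposition then forces $P_{\K}(x) = \overline{x}$, and a short algebraic check $(Q-\Id)\overline{x} + \overline{x} - (Q\overline{x}+q) = -q$ confirms that $x$ solves \eqref{eq:gen_quadprog}.

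I do not foresee any serious obstacle; the main subtlety is recognizing that it is the \emph{uniqueness} part of Moreau's decomposition (rather than mere existence) that is doing the work in the reverse direction, allowing us to identify $P_{\K}(x)$ with $\overline{x}$ without having to verify the projection through a separate variational inequality argument.
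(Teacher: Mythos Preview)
Your proposal is correct and follows essentially the same route as the paper's proof: both directions rest on Moreau's decomposition, with the forward implication reading off $Q\overline{x}+q = P_{\K}(x)-x = P_{\K^*}(-x)$ and the reverse implication invoking the uniqueness of the decomposition to conclude $P_{\K}(x)=\overline{x}$ before the same one-line algebraic verification. The only difference is cosmetic (the paper swaps the names $x$ and $\overline{x}$ in the second part), so nothing substantive separates the two arguments.
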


\begin{proof}
For the first part, let $x$ be a solution of \eqref{eq:gen_quadprog}. Using Moreau's decomposition \cite[Theorem 3.2.5]{Hiriart-Urruty:1993} for $x$, we have
\begin{equation} \label{Moreau1}
x = P_{\K}(x) - P_{\K^*}(-x),
\end{equation} and
\begin{equation} \label{Moreau2}
\langle P_{\K}(x) , P_{\K^*}(-x) \rangle = 0.
\end{equation}
By hypothesis we get that
$$ QP_{\K}(x)+q = P_{\K}(x)-x = P_{\K^*}(-x) \in \K^*,$$ where we used \eqref{Moreau1} in the last equality. Now using the previous equation and \eqref{Moreau2}, we have

$$ \langle QP_{\K}(x)+q,P_{\K}(x) \rangle = \langle P_{\K^*}(-x), P_{\K}(x) \rangle = 0,$$ implying that $P_{\K}(x)\in\K$ solves \eqref{eq:kkt}.

For the second part, let $x$ be a solution of \eqref{eq:kkt} and $\overline{x} := x -(Qx+q)$.
Since $x\in\K$, $Qx+q\in\K^*$ with $\langle Qx+q,x \rangle=0$, it follows by Moreau's decomposition that $x = P_{\K}(\overline{x})$.
Thus, replacing $\overline{x}$ in \eqref{eq:gen_quadprog} we obtain
\begin{equation*}
(Q-\Id)P_{\K}(\overline{x})+\overline{x} = (Q-\Id)x+x-(Qx+q) = -q.
\end{equation*} Therefore, $\overline{x}$ is a solution of \eqref{eq:gen_quadprog}, which concludes the proof.
\end{proof}

Theorem \ref{solKKT_iff_solQuadEq} establishes a significant connection between system \eqref{eq:kkt} and equation \eqref{eq:gen_quadprog}. In particular, it asserts that if \eqref{eq:gen_quadprog} does not have a solution, then the quadratic conic programming problem \eqref{quad_conic_prog} lacks points satisfying the complementary optimality conditions.

We now extend our previous results to the case of equation \eqref{eq:gen_quadprog}. The proofs of the next three results use Theorem \ref{exi_uni_sol} and closely follow the ideas presented in \cite{Bello-Cruz:2017}, and therefore, we omit some details for brevity. We begin by presenting two propositions regarding the existence and uniqueness of solutions as follows:

\begin{proposition} \label{Q-Id<1}
If $\|Q-\Id\|<1$, then equation \eqref{eq:gen_quadprog} has a unique solution for any $q \in \mathbb{X}$.
\end{proposition}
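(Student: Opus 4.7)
The plan is to mirror the proof of Theorem \ref{exi_uni_sol} and reformulate equation \eqref{eq:gen_quadprog} as a fixed point problem for a contractive mapping, then apply the contraction mapping principle (Theorem \ref{fixedpoint}). Solving \eqref{eq:gen_quadprog} for $x$, I would define $\Phi\colon \mathbb{X}\to \mathbb{X}$ by
$$\Phi(x) := (\Id - Q)\,P_{\K}(x) - q,$$
so that $x$ solves \eqref{eq:gen_quadprog} if and only if $x$ is a fixed point of $\Phi$.

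For any $x,y\in \mathbb{X}$ we have $\Phi(x) - \Phi(y) = (\Id - Q)(P_{\K}(x) - P_{\K}(y))$, and combining submultiplicativity of the operator norm with non-expansiveness of $P_{\K}(\cdot)$ yields
$$\|\Phi(x) - \Phi(y)\| \leq \|\Id - Q\|\,\|P_{\K}(x) - P_{\K}(y)\| \leq \|Q - \Id\|\,\|x - y\|.$$
Since $\|Q - \Id\| < 1$ by hypothesis, $\Phi$ is a contraction, and Theorem \ref{fixedpoint} delivers a unique fixed point, which is then the unique solution of \eqref{eq:gen_quadprog}.

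I do not foresee any serious obstacle. In particular, I would avoid the tempting route of left-multiplying \eqref{eq:gen_quadprog} by $(Q-\Id)^{-1}$ to cast it into the form \eqref{eq:gen_closconv} and then invoke Theorem \ref{exi_uni_sol} directly: the hypothesis $\|Q-\Id\|<1$ does not ensure invertibility of $Q-\Id$ (consider $Q=\Id$, where \eqref{eq:gen_quadprog} still has the unique solution $x=-q$), whereas the direct contraction argument above handles every case, including this degeneracy, on exactly the same footing as the proof of Theorem \ref{exi_uni_sol}.
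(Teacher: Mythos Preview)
Your proof is correct and coincides with the paper's own argument: the paper's one-line proof ``replace $T$ by $(Q-\Id)^{-1}$ in the proof of Theorem~\ref{exi_uni_sol}'' amounts to exactly the contraction map $\Phi(x)=(\Id-Q)P_{\K}(x)-q$ you write down, since $T^{-1}$ then becomes $Q-\Id$ and $T^{-1}b=-q$. Your observation that working directly with this $\Phi$ avoids any spurious invertibility assumption on $Q-\Id$ is well taken and makes explicit a point the paper's shorthand leaves implicit.
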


\begin{proof}
Similar to the proof of Theorem \ref{exi_uni_sol} but replacing $T$ by $(Q-\Id)^{-1}$.
\end{proof}

\begin{proposition}
If $Q$ is invertible and $\|Q^{-1}-\Id\|<1$, then equation \eqref{eq:gen_quadprog} has a unique solution for any $q \in \mathbb{X}$.
\end{proposition}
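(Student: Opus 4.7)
The plan is to reformulate equation \eqref{eq:gen_quadprog} as a fixed-point problem for a contraction on $\mathbb{X}$ and then invoke the contraction mapping principle (Theorem \ref{fixedpoint}). The approach parallels the one used in the proof of Theorem \ref{linconvergnewt_posdef}, where Moreau's decomposition was exploited to transfer the projection from $P_{\K}$ to $P_{\K^*}$, exposing a Lipschitz-controllable term. Reducing instead to Theorem \ref{exi_uni_sol} by multiplying \eqref{eq:gen_quadprog} by $(Q-\Id)^{-1}$ (as in Proposition \ref{Q-Id<1}) is not available here, because the hypothesis $\|Q^{-1}-\Id\|<1$ does not imply invertibility of $Q-\Id$ (take $Q=\Id$).

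First, I would substitute Moreau's decomposition $P_{\K}(x) = x + P_{\K^*}(-x)$ into \eqref{eq:gen_quadprog} and collect the $x$-terms, obtaining the equivalent identity
$$Qx + (Q-\Id)P_{\K^*}(-x) = -q.$$
Since $Q$ is invertible by hypothesis, multiplying through by $Q^{-1}$ and rearranging shows that $x$ solves \eqref{eq:gen_quadprog} if and only if $x$ is a fixed point of
$$\Phi(x) := -Q^{-1}q - (\Id - Q^{-1})P_{\K^*}(-x).$$

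Second, I would verify that $\Phi$ is a contraction. The non-expansiveness of $P_{\K^*}$ and the linearity of $\Id - Q^{-1}$ give, for all $x,y \in \mathbb{X}$,
$$\|\Phi(x)-\Phi(y)\| \leq \|\Id-Q^{-1}\|\,\|P_{\K^*}(-x)-P_{\K^*}(-y)\| \leq \|Q^{-1}-\Id\|\,\|x-y\|,$$
so the Lipschitz constant of $\Phi$ equals $\|Q^{-1}-\Id\|$, which is strictly less than $1$ by hypothesis. Theorem \ref{fixedpoint} then yields a unique fixed point $\overline{x}\in \mathbb{X}$, which by the equivalence established in the first step is the unique solution of \eqref{eq:gen_quadprog}.

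I do not foresee any serious obstacle. The only delicate point is finding the right reformulation: the Moreau substitution is chosen precisely so that the coefficient of $x$ in the resulting linear relation becomes $Q$ (rather than $Q-\Id$), allowing inversion of $Q$ to convert the hypothesis $\|Q^{-1}-\Id\|<1$ directly into the Lipschitz constant of $\Phi$. Once this algebraic observation is in place, the remainder is a routine application of the contraction mapping principle.
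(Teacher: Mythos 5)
Your proof is correct and is essentially the paper's own argument: the map you construct, $\Phi(x) = -Q^{-1}q - (\Id - Q^{-1})P_{\K^*}(-x)$, is exactly the paper's $\Phi(x)=(Q^{-1}-\Id)P_{\K^*}(-x)-Q^{-1}q$, obtained via the same Moreau decomposition $x = P_{\K}(x)-P_{\K^*}(-x)$, the non-expansiveness of $P_{\K^*}$, and the contraction mapping principle. Your remark that one cannot reduce to Theorem \ref{exi_uni_sol} via $(Q-\Id)^{-1}$ (since $Q-\Id$ need not be invertible) is a correct and welcome clarification of why this reformulation is the right one.
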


\begin{proof}
Similar to the proof of Theorem \ref{exi_uni_sol} but defining
$ \Phi(x)=(Q^{-1}-\Id)P_{\K^*}(-x)-Q^{-1}q $ and using that $x = P_{\K}(x)-P_{\K^*}(-x)$ and the fact that the projection $P_{\K^*}(\cdot)$ is non-expansive.
\end{proof}

Next, we specialize our Q-linear convergence results to the case of equation \eqref{eq:gen_quadprog}.

\begin{theorem} \label{linconvergnewt_quadprog}
Let $q \in \mathbb{X} $ and $Q\colon \mathbb{X} \rightarrow \mathbb{X}$ a linear operator. Assume that $Q-\Id$ is invertible and $\|Q-\Id\|<1$. Then, \eqref{eq:gen_quadprog} has a unique solution $\overline{x}$, and for any initial point $x^0$ the semi-smooth Newton sequence generated by \eqref{eq:semismnewt} is well-defined. Additionally, if $\|Q-\Id\|< \frac{1}{2}$ then the sequence $\{x^k\}$ converges $Q$-linearly to $\overline{x}$ and satisfies
$$ \|x^{k+1}-\overline{x}\| \leq \frac{\|Q-\Id\|}{1-\|Q-\Id\|}\|x^{k}-\overline{x}\|, \; k\in\N.$$
\end{theorem}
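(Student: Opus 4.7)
The plan is to reduce the statement to Theorem \ref{linconvergnewt} by rewriting \eqref{eq:gen_quadprog} in the standard form \eqref{eq:gen_closconv}. Since $Q-\Id$ is invertible by hypothesis, I would left-multiply \eqref{eq:gen_quadprog} by $(Q-\Id)^{-1}$ to obtain the equivalent equation $P_{\K}(x) + (Q-\Id)^{-1} x = -(Q-\Id)^{-1} q$, which matches \eqref{eq:gen_closconv} with $T := (Q-\Id)^{-1}$ and $b := -(Q-\Id)^{-1} q$. Under this identification one has $T^{-1} = Q - \Id$, so $\|T^{-1}\| = \|Q-\Id\|$ and the standing assumption $\|Q-\Id\| < 1$ (respectively, $\|Q-\Id\| < \tfrac{1}{2}$) becomes verbatim the hypothesis $\|T^{-1}\| < 1$ (respectively, $\|T^{-1}\| < \tfrac{1}{2}$) of Theorem \ref{linconvergnewt}.

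Next I would check that the semi-smooth Newton iteration \eqref{eq:semismnewt} associated to these $T$ and $b$ coincides with the Newton iteration naturally arising from \eqref{eq:gen_quadprog}. Substituting into \eqref{eq:semismnewt} yields $(V(x^k) + (Q-\Id)^{-1}) x^{k+1} = -(Q-\Id)^{-1} q$; left-multiplying by $Q-\Id$ produces $((Q-\Id)V(x^k) + \Id) x^{k+1} = -q$, which is precisely what a semi-smooth Newton step on the residual map $x \mapsto (Q-\Id) P_{\K}(x) + x + q$ gives, using the generalized Jacobian $(Q-\Id)V(x) + \Id$. Hence both schemes generate the same sequence $\{x^k\}$, so there is no ambiguity in the statement.

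With the reduction in hand, Theorem \ref{linconvergnewt} delivers all three conclusions directly: existence and uniqueness of $\overline{x}$ (from $\|T^{-1}\| < 1$, through Theorem \ref{exi_uni_sol}), well-definedness of the iterates for every starting point $x^0 \in \mathbb{X}$, and the Q-linear estimate
\[ \|x^{k+1} - \overline{x}\| \leq \frac{\|T^{-1}\|}{1 - \|T^{-1}\|}\|x^k - \overline{x}\| = \frac{\|Q-\Id\|}{1 - \|Q-\Id\|}\|x^k - \overline{x}\| \]
whenever $\|Q-\Id\| < \tfrac{1}{2}$. The only non-routine point is the algebraic verification that the Newton iteration transfers correctly under left-multiplication by $Q-\Id$; aside from this minor bookkeeping I anticipate no real obstacle, since the result is essentially a rescaled specialization of Theorem \ref{linconvergnewt}.
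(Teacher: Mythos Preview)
Your proposal is correct and is essentially the approach the paper has in mind: the paper's own proof reads in full ``Similar to the proof of Theorem \ref{linconvergnewt}'', and the identification $T=(Q-\Id)^{-1}$, $b=-(Q-\Id)^{-1}q$ you use is exactly the one announced in the introduction. Your additional check that the Newton iteration transfers correctly under left-multiplication by $Q-\Id$ is a welcome clarification that the paper leaves implicit.
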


\begin{proof}
Similar to the proof of Theorem \ref{linconvergnewt}.
\end{proof}

Before stating a result analogous to Theorem \ref{linconvergnewt_posdef}, we need the following lemma. 

\begin{lemma}\label{lemma_posdef}
Let $x \in \mathbb{X}$ and $V(x) \in \p_C P_{\K}(x)$. If $Q$ is a positive definite linear mapping, then
$ (Q-\Id)V(x)+\Id $ is invertible.
\end{lemma}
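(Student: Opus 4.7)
The plan is to show that $(Q-\Id)V(x)+\Id$ has trivial kernel, which in finite dimensions suffices for invertibility. Combining positive definiteness of $Q$ with the spectral bound $0\le\lambda_{\min}(V(x))\le\lambda_{\max}(V(x))\le 1$ from Theorem~\ref{exis_diff_proj}(iv) is the whole idea. The obvious attempt of pairing the kernel equation with $u$ itself fails to use positive definiteness cleanly because $Q$ and $V(x)$ need not commute; I would instead pair with $V(x)u$, which will make both sides spectrally transparent.

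Suppose $((Q-\Id)V(x)+\Id)u = 0$. I would rewrite this as
\[
QV(x)u \;=\; V(x)u - u,
\]
and then take the inner product with $V(x)u$ on both sides, obtaining
\[
\langle QV(x)u, V(x)u\rangle \;=\; \|V(x)u\|^2 - \langle V(x)u, u\rangle.
\]

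For the right-hand side, since $V(x)$ is self-adjoint and PSD with eigenvalues in $[0,1]$ by Theorem~\ref{exis_diff_proj}(iv), I would diagonalize $V(x)$ in an orthonormal eigenbasis $\{e_i\}$ with eigenvalues $\lambda_i\in[0,1]$, and write $u=\sum_i u_i e_i$. Then
\[
\|V(x)u\|^2 - \langle V(x)u, u\rangle \;=\; \sum_i \lambda_i(\lambda_i-1)|u_i|^2 \;\le\; 0,
\]
because each $\lambda_i(\lambda_i-1)\le 0$. On the left-hand side, since $Q$ is positive definite, $\langle QV(x)u,V(x)u\rangle \ge \lambda_{\min}(Q)\,\|V(x)u\|^2 \ge 0$, with equality only when $V(x)u=0$.

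Combining the two bounds forces $\langle QV(x)u,V(x)u\rangle = 0$, hence $V(x)u=0$. Substituting back into the kernel equation gives $0+u=0$, so $u=0$. This establishes that $(Q-\Id)V(x)+\Id$ is injective, and therefore invertible. The only subtlety worth emphasizing in the write-up is why pairing with $V(x)u$ is the correct choice: it reduces the right-hand side to a quantity that is manifestly nonpositive from the spectral bounds on $V(x)$, while the left-hand side is manifestly nonnegative from positive definiteness of $Q$, and the squeeze yields $V(x)u=0$ without any commutativity assumption between $Q$ and $V(x)$.
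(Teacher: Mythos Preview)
Your proof is correct and is essentially the paper's own argument: both rewrite the kernel equation as $QV(x)u=(V(x)-\Id)u$, pair with $V(x)u$, and squeeze between $\langle QV(x)u,V(x)u\rangle\ge 0$ (positive definiteness of $Q$) and $\langle (V(x)^2-V(x))u,u\rangle\le 0$ (eigenvalues of $V(x)$ in $[0,1]$), forcing $V(x)u=0$ and hence $u=0$. The only cosmetic difference is that you diagonalize $V(x)$ explicitly, while the paper simply invokes Theorem~\ref{exis_diff_proj}(iv) to conclude $V(x)^2-V(x)$ is negative semidefinite.
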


\begin{proof}
By contradiction let us suppose that there exists $u \neq 0$ with $((Q-\Id)V(x)+\Id)u=0$, or equivalently that
\begin{equation}\label{eq:lemma_quadinv}
QV(x)u = (V(x)-\Id)u.
\end{equation} Since $Q$ is positive definite and $V(x)$ is self-adjoint, we have that
$$ 0 \leq \langle QV(x)u , V(x)u \rangle = \langle V(x)^*QV(x)u , u \rangle = \langle V(x)QV(x)u , u \rangle = \langle (V(x)^2-V(x))u , u \rangle \leq 0,$$ where the last inequality is due to Theorem \ref{exis_diff_proj}, item (iv). Then, $\langle QV(x)u , V(x)u \rangle=0$, which implies that $V(x)u=0$. But by \eqref{eq:lemma_quadinv}, we conclude that $u=0$, which is a contradiction. Hence, the operator $(Q-\Id)V(x)+\Id$ is invertible.
\end{proof}

Finally, we present a sufficient condition for the convergence of the semi-smooth Newton method which is analogous to Theorem \ref{linconvergnewt_posdef} applied to the quadratic conic programming problem when $Q$ is positive definite.

\begin{theorem} \label{linconvergnewt_quadprog_posdef}
Let $q \in \mathbb{X} $ and $Q\colon \mathbb{X} \rightarrow \mathbb{X}$ a positive definite linear operator. Then, for any initial point $x^0\in \mathbb{X}$, the semi-smooth Newton sequence generated by \eqref{eq:semismnewt} is well-defined. Additionally, if $Q-\Id$ is invertible, then equation \eqref{eq:gen_quadprog} has a unique solution $\overline{x} \in \mathbb{X}$ and, if $\|Q-\Id\|<1$ then $\{x^k\}$ converges $Q$-linearly to $\overline{x}$ satisfying
$$ \|x^{k+1}-\overline{x}\| \leq \|Q-\Id\|\|x^{k}-\overline{x}\|, \quad k\in\N.$$
\end{theorem}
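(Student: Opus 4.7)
The plan is to adapt, step by step, the proof of Theorem \ref{linconvergnewt_posdef} to the quadratic setting, letting the pair $((Q-\Id)V(x^k)+\Id,\,(Q-\Id))$ play the role of $(V(x^k)+T,\,T^{-1})$ there, with Lemma \ref{lemma_posdef} replacing the Weyl-type argument that was available when $T$ itself was positive definite.

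First, for well-definedness I would note that the iteration \eqref{eq:semismnewt} specialized to equation \eqref{eq:gen_quadprog} reads $((Q-\Id)V(x^k)+\Id)\,x^{k+1}=-q$, and Lemma \ref{lemma_posdef} guarantees that $(Q-\Id)V(x^k)+\Id$ is invertible for any $V(x^k)\in\p_C P_{\K}(x^k)$ under the sole hypothesis that $Q$ is positive definite; hence the sequence is well-defined for every starting point $x^0\in\mathbb{X}$.

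Next, under the extra assumption that $Q-\Id$ is invertible, I would rewrite \eqref{eq:gen_quadprog} as \eqref{eq:gen_closconv} with $T=(Q-\Id)^{-1}$ and $b=-Tq$, and then repeat the Moreau-decomposition step used in Theorem \ref{linconvergnewt_posdef}: writing $x=P_{\K}(x)-P_{\K^*}(-x)$ recasts the equation as a fixed-point problem $x=\Phi(x)$, and positive definiteness of $Q$ is used to verify that $\Phi$ is a contraction, yielding the existence and uniqueness of $\overline{x}$. (Alternatively, existence and uniqueness follow from Theorem \ref{solKKT_iff_solQuadEq} combined with the fact that the strictly convex, coercive program \eqref{quad_conic_prog} admits a unique minimizer.)

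For the Q-linear rate I would derive the recursion exactly as in Theorem \ref{linconvergnewt_posdef}: subtracting $((Q-\Id)V(\overline{x})+\Id)\overline{x}=-q$ from the iterate and using $V(z)z=P_{\K}(z)$ at $z=x^k,\overline{x}$ gives
\[
((Q-\Id)V(x^k)+\Id)(x^{k+1}-\overline{x}) \;=\; (Q-\Id)\bigl[P_{\K}(\overline{x})-P_{\K}(x^k)-V(x^k)(\overline{x}-x^k)\bigr],
\]
and Lemma \ref{norm_diff2} bounds the bracket by $\|\overline{x}-x^k\|$. What remains, and what I expect to be the main obstacle, is showing $\|((Q-\Id)V(x^k)+\Id)^{-1}(Q-\Id)\|\leq \|Q-\Id\|$, i.e.\ the quadratic analog of the estimate $\|(V+T)^{-1}\|\leq\|T^{-1}\|$ used in Theorem \ref{linconvergnewt_posdef}. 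Since $(Q-\Id)V(x^k)+\Id$ is not self-adjoint when $Q$ and $V(x^k)$ do not commute, Weyl's inequality is unavailable; the bound has to be squeezed out of the quadratic-form computation used in the proof of Lemma \ref{lemma_posdef}, together with the spectral bounds $0\leq\lambda_{\min}(V(x^k))\leq\lambda_{\max}(V(x^k))\leq 1$ from Theorem \ref{exis_diff_proj}(iv). Once that uniform bound on the smallest singular value of $(Q-\Id)V(x^k)+\Id$ is secured, the recursion yields $\|x^{k+1}-\overline{x}\|\leq\|Q-\Id\|\|x^k-\overline{x}\|$, and the assumption $\|Q-\Id\|<1$ turns it into a genuine contraction.
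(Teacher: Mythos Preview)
Your proposal is exactly the paper's proof, which consists of the single sentence ``Using Lemma~\ref{lemma_posdef}, the proof follows the lines of the proof of Theorem~\ref{linconvergnewt_posdef}''; you have simply unpacked that reduction and correctly isolated the one nontrivial step---the norm bound $\|((Q-\Id)V(x^k)+\Id)^{-1}(Q-\Id)\|\le\|Q-\Id\|$ replacing the Weyl-based estimate $\|(V+T)^{-1}\|\le\|T^{-1}\|$---that the paper leaves implicit. Your worry that Weyl's inequality is unavailable because $(Q-\Id)V(x^k)+\Id$ need not be self-adjoint is legitimate, and the paper does not supply this argument either.
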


\begin{proof}
Using Lemma \ref{lemma_posdef}, the proof follows the lines of the proof of Theorem \ref{linconvergnewt_posdef}.
\end{proof}

We next consider an extension of the quadratic conic programming problem \eqref{quad_conic_prog} by including additional linear constraints. That is, given an additional linear mapping $\mathcal{A}\colon \mathbb{X}\to \mathbb{Y}$, where $\mathbb{Y}$ is also a finite dimensional inner product vector space, and given $b\in \mathbb{Y}$, we consider the problem

\begin{equation}
\label{quad_conic_lin_problem}
%\left(
\begin{matrix} 
\text{min} & \frac{1}{2}\langle x,Qx \rangle + \langle q,x \rangle, \\
\text{s.t.} & \mathcal{A}x=b, \\
 & x \in \K.
\end{matrix}
%\right)
\end{equation} The Lagrangian function associated with \eqref{quad_conic_lin_problem} is given by:

\begin{equation} \label{eq:lagrangian_quad_conic_lin}
L(x,\lambda,\mu) = \frac{1}{2}\langle x,Qx \rangle + \langle q,x \rangle + \langle \lambda,\mathcal{A}x-b \rangle-\langle\mu,x\rangle.
\end{equation} where $\lambda \in \mathbb{Y}$ and $\mu\in \mathbb{X}$. The complementary KKT conditions for problem \eqref{quad_conic_lin_problem} are given by:
\begin{align*}
Q\overline{x}+q+\mathcal{A}^*\overline{\lambda}-\overline{\mu} &= 0,\\
\mathcal{A}\overline{x} &=b,\\
\langle\overline{\mu},\overline{x}\rangle &= 0,
\end{align*} where the Lagrange multipliers are $\overline\mu\in \K^*$ and $\overline\lambda\in \mathbb{Y}$. This system can be rewritten as the following complementary system:
\begin{equation} \label{eq:_linear_kkt}
\Biggl \langle
\Biggl(
\begin{matrix}
Q\overline{x} + \mathcal{A}^*\overline{\lambda} + q \\
\mathcal{A}\overline{x} - b
\end{matrix}
\Biggr),
\Biggl(
\begin{matrix}
\overline{x} \\
\overline{\lambda}
\end{matrix}
\Biggr)
\Biggr \rangle = 0 \text{, }\text{ }\text{ }\text{ }
\Biggl(
\begin{matrix}
Q\overline{x} + \mathcal{A}^*\overline{\lambda} + q \\
A\overline{x} - b
\end{matrix}
\Biggr) \in K^*\text{, }\text{ }\text{ }\text{ }(\overline{x},\overline{\lambda}) \in K,
\end{equation}
where $K=\K \times \mathbb{Y}$ and its dual is given by $K^*=\K^*\times\{0\}$. Thus, by Theorem \ref{solKKT_iff_solQuadEq}, this system may be solved by means of the following projection equation
\begin{equation}\label{aux1}
\left(\left(\begin{array}{cc}Q&\mathcal{A}^*\\\mathcal{A}&0\end{array}\right)-\Id\right)P_{K}(x,\lambda)+\left(\begin{array}{c}x\\ \lambda\end{array}\right)=\left(\begin{array}{c}-q\\b\end{array}\right),
\end{equation}
where a solution $(\overline{x},\overline{\lambda})$ of \eqref{aux1} is such that $P_{K}(\overline{x},\overline{\lambda})=(P_{\K}(\overline{x}),\overline{\lambda})$ solves the complementary system \eqref{eq:_linear_kkt}.
Notice that equation \eqref{aux1} can be rewritten as

\begin{equation} \label{eq:gen_quadprog_linear}
\Biggl(
\begin{matrix}
(Q-\Id)P_{\K}(x) + \mathcal{A}^*\overline{\lambda} + x \\
\mathcal{A} P_{\K}(x)
\end{matrix}
\Biggr) =
\Biggl(
\begin{matrix}
-q \\
b
\end{matrix}
\Biggr),
\end{equation} 
and for any starting point $x^0\in \mathbb{X}$, the correspondent semi-smooth Newton iteration can be rewritten as follows: 
\begin{equation} \label{eq:newt_gen_quadprog_linear}
\Biggl(
\begin{matrix}
(Q-\Id)V(x^k)x^{k+1}+x^{k+1} + \mathcal{A}^*\lambda^{k+1} \\
\mathcal{A}V(x^k)x^{k+1}
\end{matrix}
\Biggr) =
\Biggl(
\begin{matrix}
-q \\
b
\end{matrix}
\Biggr),
\end{equation}
for $k \in \N$. The convergence of the sequence $\{(x^k,\lambda^k)\}$ generated by \eqref{eq:newt_gen_quadprog_linear} is guaranteed by applying Theorems \ref{linconvergnewt_quadprog} and \ref{linconvergnewt_quadprog_posdef} with respect to equation \eqref{aux1}.  

%%%%%%%%%%%%%%%%%%%%%%%%%%%%%%%%%%%%%%%%%%%%%
\section{The nearest correlation matrix problem}
%%%%%%%%%%%%%%%%%%%%%%%%%%%%%%%%%%%%%%%%%%%%%
\label{sec:computationalresults}

In this section, we describe the application of the semi-smooth Newton method to the nearest correlation matrix problem. This specific problem represents a special case of the broader positive semidefinite least squares problem (referenced as \eqref{SDP_lst_sqrs} below), a well-studied area notable for its significant applications and established algorithms; see \cite{Higham:2002, Malick:2004}.

Let $\mathbb{X}=\mathbb{S}^n$ be the set of symmetric $n\times n$ matrices with real entries and $\K=\mathbb{S}^n_+\subset\mathbb{S}^n$ be the cone of positive semidefinite matrices. Given any finite dimensional inner product space $\mathbb{Y}$ and a linear mapping $\mathcal{A}\colon\mathbb{S}^n\to\mathbb{Y}$, we consider the problem

\begin{equation}\label{SDP_lst_sqrs}
%\left(
\begin{matrix}
\text{min} & \frac{1}{2}\|X-G \|^2, \\
\text{s.t.} & \mathcal{A}(X) = b,\\
& X \in \sym^n_+,\\
\end{matrix}
%\right)
\end{equation} where $G\in\mathbb{S}^n$ and $b\in \mathbb{Y}$ are given and the Frobenius norm is defined as $\| A \| =\sqrt{\langle A,A\rangle}$, where $\langle A,B\rangle={\rm trace}(AB)$ for $A,B\in\mathbb{S}^n$. 

Since $\frac{1}{2}\|X-G\|=\frac{1}{2}\langle X,X \rangle - \langle X,G \rangle$, problem \eqref{SDP_lst_sqrs} is an instance of \eqref{quad_conic_lin_problem} with $Q=\Id$ and $q=-G$. Thus, substituting in \eqref{eq:newt_gen_quadprog_linear} we arrive at the following iteration for the semi-smooth Newton method for finding a solution $(X,\Lambda)\in\mathbb{S}^n\times\mathbb{Y}$ of \eqref{aux1}:
\begin{equation}
\label{aux2}
\Biggl(
\begin{matrix}
X^{k+1} + \mathcal{A}^*(\Lambda^{k+1}) \\
\mathcal{A}V(X^k)X^{k+1}
\end{matrix}
\Biggr) =
\Biggl(
\begin{matrix}
G \\
b
\end{matrix}
\Biggr).
\end{equation}

The nearest correlation matrix problem is the particular case where $\mathbb{Y}=\R^n$ and $\mathcal{A}=\diag\colon\sym^n\to\R^n$ which maps a symmetric matrix to its diagonal vector. Its dual $\mathcal{A}^*=\Diag\colon\R^n\to\sym^n$ maps a vector to a diagonal matrix with the given vector in its diagonal. The right-hand side vector $b\in\R^n$ will be fixed at the vector $e$ of all ones. Namely, let us consider problem

\begin{equation} \label{Nearest_corr_prblm}
\begin{matrix}
\text{min} &\frac{1}{2} \|X-G \|^2, \\
\text{s.t.} & \diag(X) = e,\\
 & X \in \sym^n_+ .\\
\end{matrix}
\end{equation}

Thus, substituting in \eqref{eq:gen_quadprog_linear} we aim at solving the following projection equation for $(\overline{X},\overline{\lambda})\in\mathbb{S}^n\times\mathbb{R}^n$:
\begin{align} \label{corr_mat_equation_sol}
\overline{X} +  \Diag(\overline{\lambda}) &= G, \\
\diag(P_{\sym^n_+}(\overline{X})) &= e,
\end{align}
by means of the semi-smooth Newton iteration
\begin{align} 
\label{a} X^{k+1} +  \Diag(\lambda^{k+1}) &= G, \\
\label{b} \diag(V(X^k)X^{k+1}) &= e,
\end{align}
which is obtained from \eqref{aux2}.

From \eqref{corr_mat_equation_sol} and \eqref{a} we deduce that the off-diagonal entries of $\overline{X}$ and $X^{k+1}$ are equal to the correspondent off-diagonal entries of $G$. Thus, by defining $D^{k+1}=\Diag(\diag(X^{k+1}))$ and $\hat{G} = G-\Diag(\diag(G))$ we obtain
\begin{align}
\label{corr_iter_a} X^{k+1} &= D^{k+1}+\hat{G}, \\
\label{corr_iter_b} \lambda^{k+1} &= \diag(G)- \diag(D^{k+1}). 
\end{align} Now, a simple calculation using $\eqref{corr_iter_a}$ gives
\begin{align*}
\diag(V(X^k)X^{k+1}) &= \diag(V(X^k)(D^{k+1}+\hat{G})), \\
 &= \diag(V(X^k)D^{k+1}) + \diag(V(X^k)\hat{G}), \\
 &= \Diag( \diag(V(X^k))\diag(D^{k+1}) + \diag(V(X^k)\hat{G}). \\
\end{align*} Thus, substituting in $\eqref{b}$ we arrive at the following expression for the iteration $D^{k+1}$:
\begin{equation}
\label{corr_iter_c} \diag(D^{k+1}) = ( \Diag( \diag(V(X^k)))^{-1}[e- \diag(V(X^k)\hat{G})],
\end{equation}
which gives a fully computable iteration for our Newton method applied to the nearest correlation matrix problem. In particular, we compute $V(X) = U D U^T$ from the spectral decomposition $X = U\Lambda U^T$, where $D$ is a diagonal matrix with $D_{ii}=1$ if $\Lambda_{ii} > 0$ and $D_{ii}=0$ if $\Lambda_{ii} \leq 0$. When iteration \eqref{corr_iter_c} is not defined, we use in our implementation the pseudoinverse, however this never occurred in the tests we run. We prove next that the iteration is well defined when $\diag(X^k)>0$:

\begin{proposition}
Let $X \in \sym^n$. If $ \diag(X) > 0$, then $ \diag(V(X)) >0$.
\end{proposition}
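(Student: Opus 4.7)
The plan is to exploit the explicit formula for $V(X)$ stated immediately before the proposition: if $X = U\Lambda U^T$ is the spectral decomposition with $\Lambda = \diag(\lambda_1,\dots,\lambda_n)$, then $V(X) = U D U^T$ where $D$ is the diagonal matrix with $D_{jj} = 1$ when $\lambda_j > 0$ and $D_{jj} = 0$ otherwise. Starting from this, the $i$-th diagonal entry of $V(X)$ is
\[
(V(X))_{ii} \;=\; \sum_{j=1}^{n} D_{jj}\, U_{ij}^{2} \;=\; \sum_{j:\,\lambda_j > 0} U_{ij}^{2},
\]
while the corresponding entry of $X$ itself admits the expansion
\[
X_{ii} \;=\; \sum_{j=1}^{n} \lambda_j U_{ij}^{2} \;=\; \sum_{j:\,\lambda_j > 0} \lambda_j U_{ij}^{2} \;+\; \sum_{j:\,\lambda_j < 0} \lambda_j U_{ij}^{2}.
\]

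From here I would proceed by contraposition. Assume $(V(X))_{ii} = 0$ for some index $i$; then $U_{ij} = 0$ for every $j$ with $\lambda_j > 0$. Substituting into the expression for $X_{ii}$, the first sum disappears, leaving only contributions from indices with $\lambda_j < 0$, each of which is non-positive. Hence $X_{ii} \le 0$, contradicting the hypothesis $\diag(X) > 0$. Since $(V(X))_{ii}$ is a sum of squares it is non-negative, so strict positivity follows.

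I do not foresee a genuine obstacle: the result is essentially a one-line calculation, and the only mild subtlety is making sure to parse $V(X)$ as the $n\times n$ projection matrix onto the positive eigenspace of $X$, in line with the convention adopted just before the proposition, rather than as an element of $\partial_C P_{\sym^n_+}(X)$ viewed as an operator on $\sym^n$. With that convention in place, the argument above is essentially complete and can be written in just a few lines.
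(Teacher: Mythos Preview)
Your proposal is correct and essentially identical to the paper's proof: both expand $X_{ii}=\sum_j \lambda_j U_{ij}^2$ and $V(X)_{ii}=\sum_{j:\lambda_j>0}U_{ij}^2$ from the spectral decomposition and observe that $X_{ii}>0$ forces some index $k$ with $\lambda_k>0$ and $U_{ik}\neq0$. The only difference is cosmetic---the paper argues directly by picking such a $k$, while you phrase it contrapositively---so nothing further is needed.
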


\begin{proof}
Let $X=U\Lambda U^T$ and $i \in \{1,\dots,n\}$. Denote by $u_j$ the $j$-th row of $U, j=1,\dots,n$. We have
$$ X_{ii} = \sum_{j=1}^n \Lambda_{jj}(u_j)_i^2 > 0.$$ Then there exists $k$ such that $\Lambda_{kk}(u_k)_i^2 > 0$. In particular, $\Lambda_{kk}>0$ and $(u_k)_i^2>0$. Then, $V(X)=UDU^T$ with $D_{kk}=1$. Therefore,
\begin{align*}
V(X)_{ii} &= \sum_{j=1}^n D_{jj}(u_j)_i^2 = D_{kk}(u_k)_i^2 + \sum_{j \neq k} D_{jj}(u_j)_i^2 \geq (u_k)_i^2> 0,
\end{align*}
proving the result.
\end{proof}

It turns out that in \cite{Qi:2006}, a method closely resembling ours was proposed for the nearest correlation matrix problem \eqref{Nearest_corr_prblm_2}. In their approach, a semi-smooth Newton method is applied to the following function:
$$\tilde{F}(y):=\mathcal{A}P_{\mathbb{S}^n_+}(G+\mathcal{A}^*y)-e,$$
where $\mathcal{A}=\diag$.
A significant contribution of their work is the provision of a formula for computing the matrix-vector operation $h\mapsto V_yh$, where $V_y$ belongs to the (B-)subdifferential of $\tilde{F}$ at $y$, whithout the need to compute $V_y$ explicitly. The drawback to this approach is that they must then resort to an iterative procedure for computing each Newton iteration, without exploiting the diagonal structure of the operator $\mathcal{A}$. In our approach, we compute the subdifferential explicitly, which allows us to solve explicitly the diagonal Newtonian linear system. We note that both methods require computing the full spectral decomposition of an $n\times n$ matrix at each iteration. The algorithm in \cite{Qi:2006} also adds a line search procedure in order to ensure global convergence. For a fairer comparison in our study, we turned off this procedure, although its impact on the method's performance was minimal.

\section{Numerical experiments}

In order to observe the behavior of the method, we conducted experiments 5.5 through 5.8 as described in \cite{Qi:2006}. In these four experiments the {\it Nearest Correlation Problem} is solved using randomly generated data. We remind the definition of the problem

\begin{equation} \label{Nearest_corr_prblm_2}
\begin{matrix}
\text{min} & \frac{1}{2}\|X-G \|^2, \\
\text{s.t.} & \diag(X) = e,\\
 & X \in \sym^n_+ .
\end{matrix}
\end{equation}

Experiment 5.5 involves generating the matrix $G$ as $G:=C+\alpha R$, where $C$ is a random correlation matrix generated by the \texttt{randcorr} Matlab command, $R_{ij} \in [-1,1]$ is random, and $\alpha\geq0$ is chosen. In Experiment 5.6, $G$ is defined as a matrix of random numbers $G_{ij} \in [-1,1]$ with $G_{ii}$ fixed at $1$ for $i=1,\dots,n$. Experiment 5.7 is analogous to 5.6, but in this case $G_{ij} \in [0,2]$. Finally, Experiment 5.8 defines $G$ as
$$ G :=
\left(\begin{matrix}
\frac{\ell}{1-\ell}(E_\ell-\Id_\ell) & 0\\
0 & 0
\end{matrix}\right) + D + \alpha R,$$
where $1 \leq \ell \leq n$, $\alpha\geq0$, $\Id_\ell$ is the identity matrix of dimension $\ell$, $E_\ell$ is a matrix of $1$'s of dimension $\ell$, $D$ is a random diagonal matrix with $D_{ii} \in [-20000,20000]$, and $R$ is a random matrix such that $R_{ij} \in [-1,1]$. The comparison is made using performance profiles \cite{dolan:2002}. We built $10$ random problems for each choice of parameters. In Experiment 5.5, $n=3000$ is fixed and we test all values of $\alpha\in\{0.01, 0.1, 1, 10\}$. For Experiments 5.6 and 5.7 we test $n=1000, 2000, 3000$, and for Experiment 5.8 we test $n = 5000, 8000, 10000$ for $\alpha=0.001$ and $\ell = n/2$.

The experiments were ran in a 2.30 GHz Intel(R) Core(TM) i5-8300H processor, 16 GB of RAM and operating system Windows 10 using Matlab 9.5.0.944444 (R2018b).
We stopped the execution of the methods when the Euclidean residual error is smaller than $10^{-5}$ and we report the CPU time required by both methods.

In Experiment 5.5, our method was in general slower than that of Qi and Sun \cite{Qi:2006}. For $\alpha=0.01$, our method took on average 65\% more CPU time. This percentage was 137\% and 499\% for $\alpha=1$ and $10$, respectively. However, for $\alpha=0.1$, their method took on average 13\% more CPU time than ours. In Figure \ref{fig:3k_01} we present a performance profile for Experiment 5.5 with $\alpha = 0.1$.

\begin{figure}[!ht]
\centering
\includegraphics[scale=0.5]{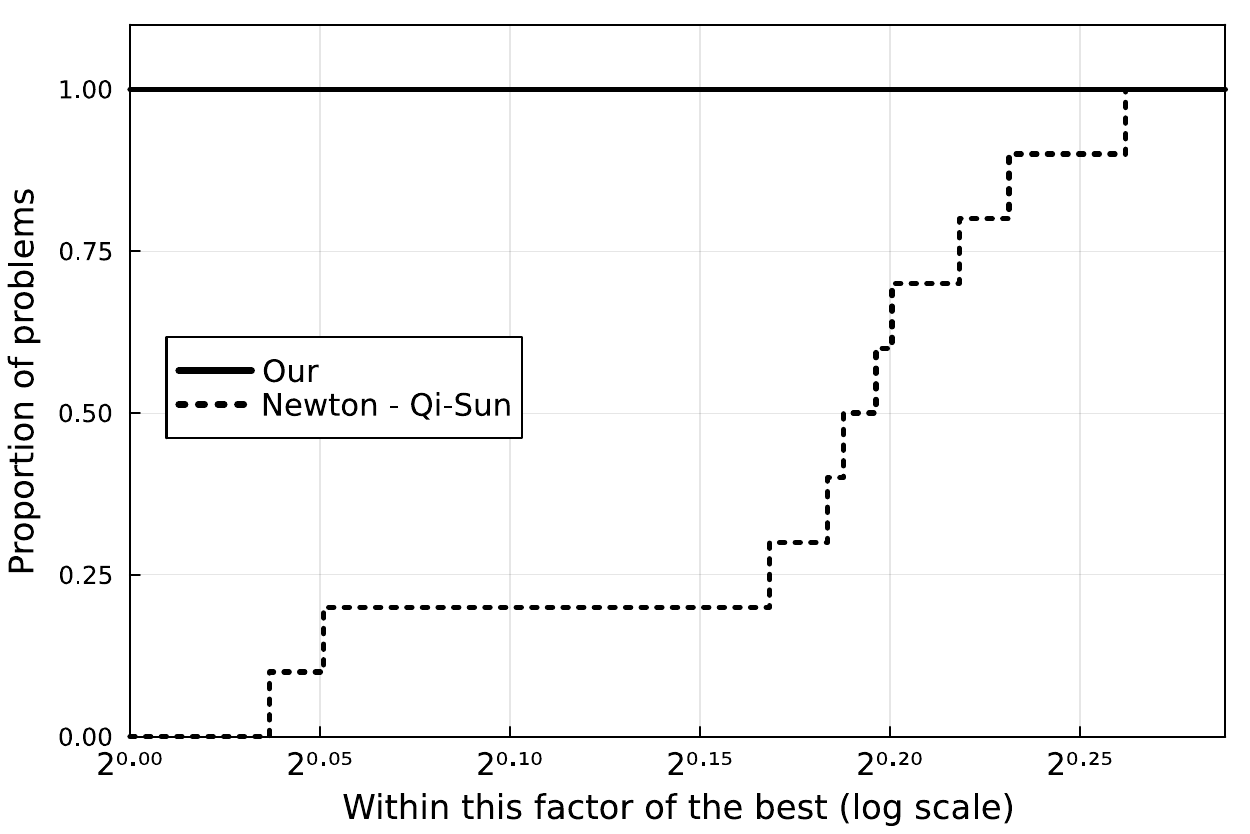}
\caption{Performance profile for Experiment 5.5 with $\alpha = 0.1$.}
\label{fig:3k_01}
\end{figure}

In Experiments 5.6 and 5.7, we observed that our method was slower compared to \cite{Qi:2006}. For Experiment 5.6 our method took on average $1.6$ times the CPU time required by the method from \cite{Qi:2006} for $n=1000$, $2$ times for $n=2000$ and $2.5$ times for $n=3000$. The situation was worse regarding Experiment 5.7 where we observe that our method took around $4$ times the total CPU time required by the method from \cite{Qi:2006} for $n=1000$. It was $7$ times slower for $n=2000$ and $9$ times slower for $n=3000$. The situation is much more favorable in Experiment 5.8. In Figures \ref{fig:5k_0001}, \ref{fig:8k_0001}, and \ref{fig:10k_0001}, we present the performance profiles concerning $n=5000$, $n=8000$, and $n=10000$, respectively. In Table \ref{fig:Tab_5.8} we show the average total time and the average number of iterations for both methods and all dimensions tested.

We observed that the method of Qi and Sun was faster in all problem with dimension $5000$ and in $~75\%$ of the problems with dimension $8000$, while our method was faster in all problems of dimension $10000$. In Table \ref{fig:Tab_5.8} we can see that our method in general requires more iterations to converge, yet the computation of the iteration is cheaper. This pattern was consistent  across other experiments we conducted. Despite this, the difference in the number of iterations was not significantly enough in order for the better iteration cost to yield a better overall performance. In Experiment 5.8, we noted that as the dimension increases, our method becomes more efficient compared to the method of Qi and Sun as the number of iterations becomes similar. We could not, however, replicate this phenomenon for different values of $\alpha$. This suggests that although we can exploit the diagonal structure of the linear system in order to compute Newton's direction, while Qi and Sun's method resorts to a conjugate gradient method, the subgradient they compute is somewhat more efficient towards solving the problem than the one we obtain in our approach. Specifically, in Experiments 5.6 and 5.7 where the matrix $G$ is far from being positive semidefinite, the better subgradient of Qi and Sun pays off considerably. Nonetheless, our method shows potential superiority for the low noise level regime ($\alpha\approx0$) and high values of $n$ in Experiments 5.5 and 5.8.

\begin{figure}[!ht]
\centering

\begin{subfigure}[b]{0.325\textwidth}
\centering
\includegraphics[scale=0.26]{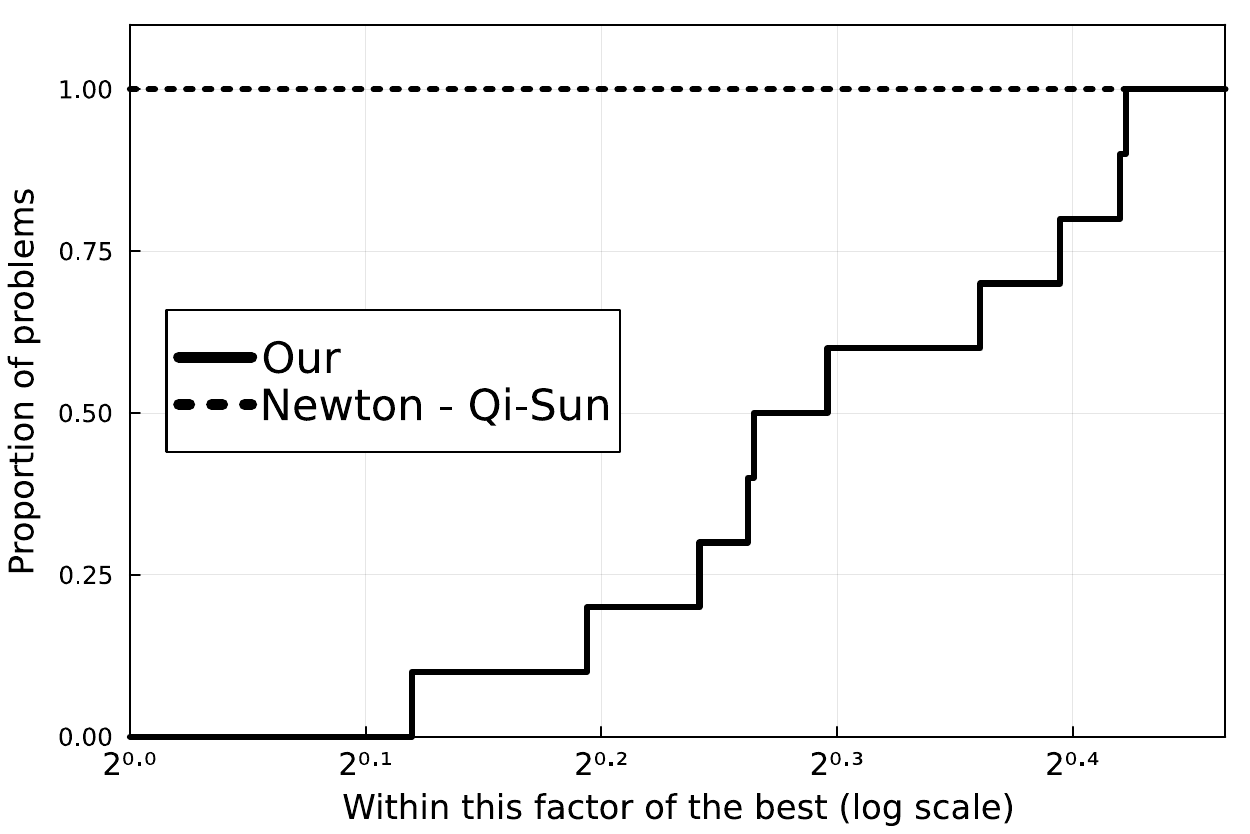}
\caption{$n=5000$}
\label{fig:5k_0001}
\end{subfigure}
\hfill
\begin{subfigure}[b]{0.325\textwidth}
\centering
\includegraphics[scale=0.26]{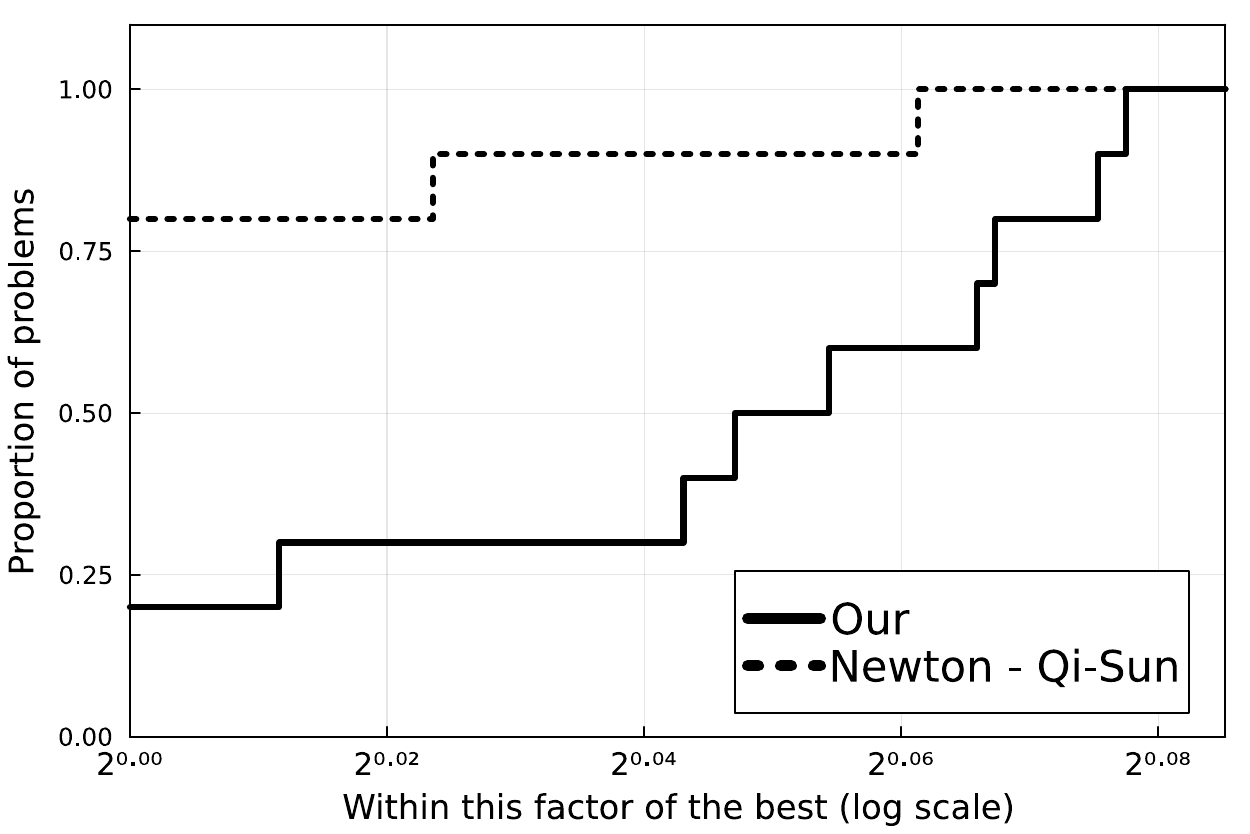}
\caption{$n=8000$}
\label{fig:8k_0001}
\end{subfigure}
\hfill
\begin{subfigure}[b]{0.325\textwidth}
\centering
\includegraphics[scale=0.26]{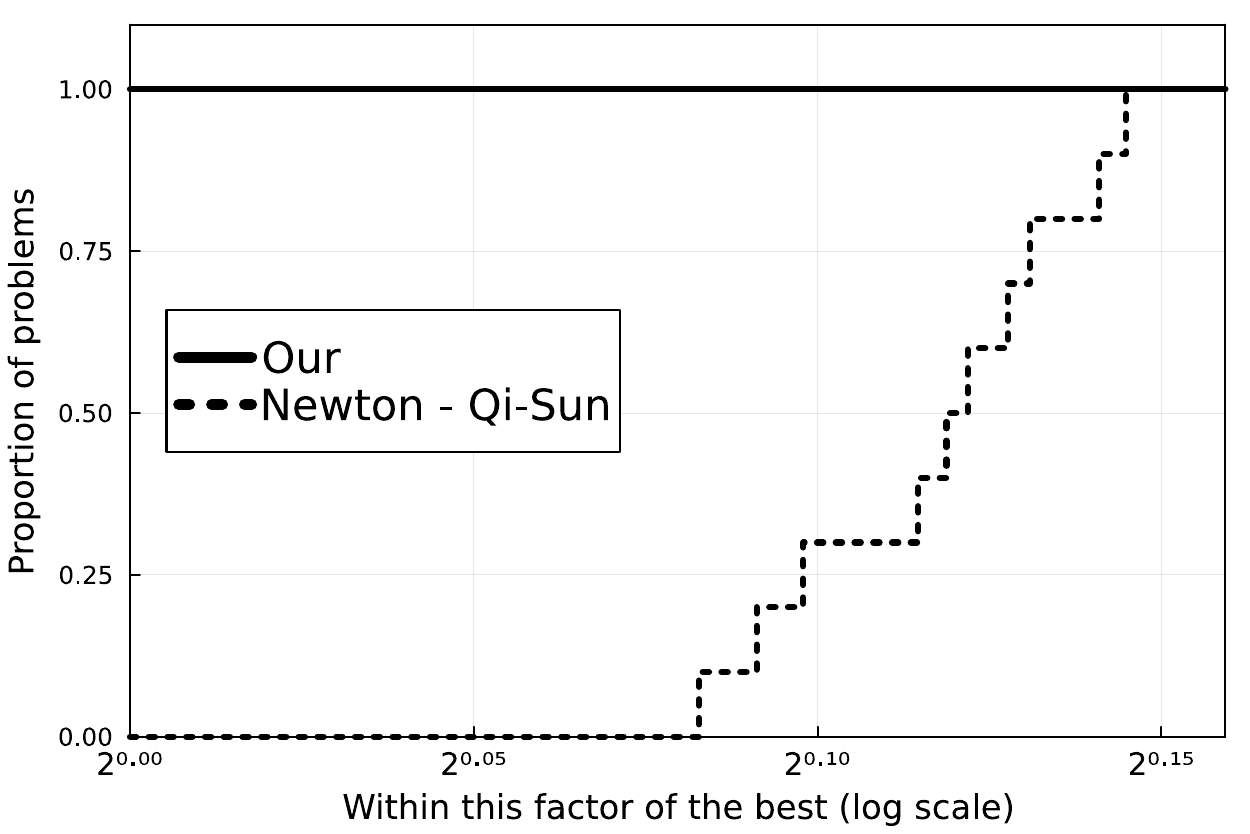}
\caption{$n=10000$}
\label{fig:10k_0001}
\end{subfigure}
\caption{Performance profiles for Experiment 5.8.}
\label{fig:PerfProf}
\end{figure}

\begin{table}[!ht]
\centering
\begin{tabular*}{0.57\textwidth}{lcccccc}
\toprule%
& \multicolumn{2}{@{}c@{}}{$n=5000$} & \multicolumn{2}{@{}c@{}}{$n=8000$} & \multicolumn{2}{@{}c@{}}{$n=10000$} \\\cmidrule{2-4}\cmidrule{5-7}%
Method & time (s) & it & time (s) & it & time (s)& it \\
\midrule
Our & 972,99 &	13 & 3677,27 &	10 & 6880,66 &	10 \\
Qi-Sun & 791,12 &	8	& 3588,47 &	8	& 7463,56 &	9	\\
\hline
\end{tabular*}
\caption{Average total time and number of iterations for $n=5000$, $n=8000$, and $n=10000$.}
\label{fig:Tab_5.8}
\end{table}

%%%%%%%%%%%%%%%%%%%%%%%%%%%%%%%%%%%%%%%%%%%%%
\section{Concluding remarks}
%%%%%%%%%%%%%%%%%%%%%%%%%%%%%%%%%%%%%%%%%%%%%

In this paper, we investigated the global convergence properties of the semi-smooth Newton method when applied to a general projection equation within finite-dimensional spaces. We have further highlighted the intrinsic connection between the solutions of this projection equation and the constrained quadratic conic programming problem. Comparative experiments on semidefinite least squares problems, particularly the nearest correlation matrix problem, benchmarked against existing literature, underscore the efficacy of the proposed method. The methodology introduced herein paves the way for future research into the versatility and performance of the semi-smooth Newton method in addressing a broader conic constrained problems via generalized projection equations.

\end{document}